\documentclass[11pt,a4paper]{amsart}

\usepackage{amsmath}
\usepackage{amssymb}
\usepackage{enumerate}

\usepackage[T1]{fontenc}
\usepackage{lmodern}
\usepackage{textcomp}
\usepackage{tikz}               

\usetikzlibrary{calc,through,backgrounds,arrows}
\usepackage{color}

\definecolor{webblue}{rgb}{0, 0, 1.0}  
\definecolor{webred}{rgb}{1.0, 0, 0}   

\newcommand{\gauss}[3]{\genfrac{[}{]}{0pt}{}{#1}{#2}_{#3}}

\newcommand{\eye}{\ensuremath{\!\!\begin{array}{c}\frown\\[-10pt]\smile\end{array}\!\!}}
\newcommand{\nei}{\ensuremath{\not\!\!\!\!\!\begin{array}{c}\frown\\[-11pt]\smile\end{array}\!\!}}

\newcommand{\vek}[1]{\boldsymbol{#1}}

\DeclareMathOperator{\PG}{PG} \DeclareMathOperator{\GR}{GR}
\DeclareMathOperator{\PHG}{PHG} 
  
\DeclareMathOperator{\Aut}{Aut} \DeclareMathOperator{\rad}{Rad}

\usepackage[papersize={165mm,235mm},top=26mm,left=16mm,text={128mm,192mm},%
footskip=4pt,headsep=6mm,marginparsep=0pt]{geometry}
\usepackage{url}
\newtheorem{theorem}{Theorem}[section]

\newtheorem{lemma}[theorem]{Lemma}

\theoremstyle{definition}

\numberwithin{equation}{section}

\title[Analogue of the Erd\H{o}s-Ko-Rado Theorem]{A Chain Ring Analogue of the Erd\H{o}s-Ko-Rado Theorem}

\author[I. Landjev]{Ivan Landjev}
\address[I. Landjev]{Institute of Mathematics and Informatics, BAS, 
	8,~Acad. G. Bonchev. str., 1113 Sofia, Bulgaria}
\email{\tt ivan@math.bas.bg}

\author[E. Rogachev]{Emiliyan Rogachev}

\address[E. Rogachev]{Sofia University, Faculty of Mathematics and Informatics, 5,~James Bourchier Blvd, 1164 Sofia, Bulgaria}
\email{{\tt erogachev@fmi.uni-sofia.bg}}

\author[A. Rousseva]{Assia Rousseva}

\address[A. Rousseva]{Sofia University, Faculty of Mathematics and Informatics, 5,~James Bourchier Blvd, 1164 Sofia, Bulgaria}
\email{{\tt assia@fmi.uni-sofia.bg}}

\begin{document}
	
	\begin{abstract}
		In this paper, we prove an analogue of the Erd\H{o}s-Ko-Rado theorem
intersecting families of subspaces in projective Hjelmslev
geometries over finite chain rings of nilpotency index 2.
We give an example of maximal families that 
are not canonically intersectng.\\
		
		\textbf{Keywords:} finite chain rings, modules ovet finite chain rings, 
Erd\H{o}s-Ko-Rado theorem, projective Hjelmslev geometries, $t$-intersecting family 

		\textbf{Math. Subject Classification:} 94B65, 51A20, 51A21, 51A22
	\end{abstract}

	\maketitle


\section{Some Classic Results}

In 1961 Pal Erd\H{o}s, Chao Ko and Richard Rado published a theorem which 
solved 
a problem in extremal set theory and initiated a lot of research \cite{EKR61}.

\begin{theorem}
\label{thm:EKR}(Erd\H{o}s-Ko-Rado)\ 
Let $\Omega$ be a finite set with $n$ elementsand let
$k\le n/2$ be an integer. If $\mathcal{F}$ is a family of
$k$-element subsets of 
$\Omega$ that are pairwise not disjoint then
\[|\mathcal{F}|\le{n-1\choose k-1}.\]
\end{theorem}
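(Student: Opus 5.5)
The plan is to use Katona's cyclic permutation argument combined with double counting.

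Let $n=|\Omega|$ and $1\le k\le n/2$. If $n=2k$ the statement is easy: the $k$-subsets split into ${n\choose k}/2={n-1\choose k-1}$ complementary pairs, and $\mathcal{F}$ contains at most one set from each pair. So the real work is the case $n>2k$, although the argument below covers $n\ge 2k$ uniformly.

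Arrange the elements of $\Omega$ around a circle in a cyclic order $\sigma$. Call a $k$-set an \emph{arc} of $\sigma$ if its elements are consecutive in this order. Each cyclic order has exactly $n$ arcs.

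The key lemma is this: for a fixed cyclic order $\sigma$, at most $k$ arcs of $\sigma$ can belong to a pairwise intersecting family.
\begin{enumerate}[(i)]
\item Suppose $A$ is an arc in $\mathcal{F}$, occupying positions $1,\dots,k$.
\item Every other arc meeting $A$ either starts at one of positions $2,\dots,k$ or ends at one of positions $1,\dots,k-1$.
\item This gives $2(k-1)$ candidate arcs. Pair the arc ending at position $i$ with the arc starting at position $i+1$, for $i=1,\dots,k-1$.
\item These two arcs are disjoint because $n\ge 2k$, so at most one arc from each pair lies in $\mathcal{F}$.
\item Hence at most $1+(k-1)=k$ arcs of $\sigma$ lie in $\mathcal{F}$.
\end{enumerate}
Checking the disjointness of the paired arcs is where the hypothesis $k\le n/2$ enters. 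I expect this step to be the main delicate point.

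Now double count the pairs $(\sigma,F)$ with $F\in\mathcal{F}$ an arc of $\sigma$. There are $(n-1)!$ cyclic orders, so the lemma gives at most $k\,(n-1)!$ such pairs. On the other hand, a fixed $k$-set is an arc of exactly $k!\,(n-k)!$ cyclic orders: arrange the set internally, arrange its complement, and glue the two blocks on the circle. Therefore
\[
|\mathcal{F}|\,k!\,(n-k)!\le k\,(n-1)!,
\]
which gives
\[
|\mathcal{F}|\le \frac{k\,(n-1)!}{k!\,(n-k)!}={n-1\choose k-1},
\]
as required.
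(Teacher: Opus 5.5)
The paper gives no proof of this statement. It quotes it as the classical 1961 result of Erd\H{o}s, Ko and Rado, purely as background, so there is no in-paper argument to compare with.

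Your proof is Katona's cyclic-permutation argument, and it is correct.

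In the arc lemma, the arc ending at position $i$ and the arc starting at position $i+1$ together occupy the $2k$ consecutive positions $i-k+1,\dots,i+k$. These are distinct modulo $n$ exactly when $n\ge 2k$. So the step you flagged as delicate is immediate, and it is the only place the hypothesis is used.

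The count $k!\,(n-k)!$ is also right. For $k<n$ an arc has a unique first element, and cutting the circle just before it gives a bijection with the linear orders in which $F$ fills the first $k$ places.

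You tacitly need $k\ge 1$. For $k=0$ the bound reads $0$ while $\{\varnothing\}$ is vacuously intersecting, so $k\ge1$ is the intended reading.

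Compared with the original proof, which works by shifting (compression) and induction on $n$, your averaging argument is much shorter. What the shifting method buys is flexibility. The original paper also used it for the large-$n$ $t$-intersecting version (Theorem~\ref{thm:EKR1}), which the plain cyclic argument cannot reach. For large $n$, arcs that pairwise share at least $t$ points number at most $k-t+1$. Your double count then only yields $|\mathcal{F}|\le\frac{k-t+1}{k}\binom{n-1}{k-1}$, which is of order $n^{k-1}$ rather than $n^{k-t}$.
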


In the case of $n\ge 2k+1$ the upper bound is attained iff $\mathcal{S}$ is
canonically intersecting family, i.e.
it contains all $k$-element subsets of $\Omega$ containig a fixed element of
$\Omega$. In the case of $n=2k$ we have plenty of examples. All they are
constructed by splitting all $k$-element subsets in pairs
$(X,\Omega\setminus X)$ and taking one set from each such pair.

An important generalization of this problem is to find the maximal 
size of a $t$-intersecting family defined as a family of subsets
every two of which meet in at least $t$ elements.
In this case we have the following theorem.

\begin{theorem}
\label{thm:EKR1}
Let $\Omega$ be a finite set with $|\Omega|=n$ and let $k$ and $t$,
$1\le t<k$, be integers. Let $\mathcal{F}$ be a $t$-intersecting family of $k$-subsets
of $\Omega$. There exists a constant $f(k,t)$ such that if $n>f(k,t)$
\[|\mathcal{F}|\le{n-t\choose k-t},\]
and $\mathcal{F}$ meets the bound iff it is canonically intersecting.
\end{theorem}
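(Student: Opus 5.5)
We prove the statement with the classical kernel (sunflower) argument of Erd\H{o}s, Ko and Rado for large $n$, with $f(k,t)$ chosen at the end. Let $\mathcal{F}$ be a $t$-intersecting family of $k$-subsets with $|\mathcal{F}|\ge{n-t\choose k-t}$. We split into two cases according to the size of $T=\bigcap_{F\in\mathcal{F}}F$.

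\textbf{Case 1: $|T|\ge t$.} Every member of $\mathcal{F}$ contains a fixed $t$-set $T_0\subseteq T$. Then $|\mathcal{F}|\le{n-t\choose k-t}$, with equality iff $\mathcal{F}$ consists of all $k$-sets through $T_0$, i.e.\ iff $\mathcal{F}$ is canonically intersecting.

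\textbf{Case 2: $|T|<t$.} Here we show that $|\mathcal{F}|$ is at most a quantity of order $c(k,t)\,n^{k-t-1}$, which is smaller than ${n-t\choose k-t}\sim n^{k-t}/(k-t)!$ for large $n$.
\begin{enumerate}[(i)]
\item Fix $A\in\mathcal{F}$. Since $|T|<t$, the set $A$ does not work as a common $t$-core. Choose a $t$-subset $S\subseteq A$.
\item Some $B\in\mathcal{F}$ fails to contain $S$; otherwise $S\subseteq T$, which is impossible.
\item Every $F\in\mathcal{F}$ meets both $A$ and $B$ in at least $t$ elements. Hence $F$ contains a $t$-subset $X\subseteq A$.
\item If $X\not\subseteq B$, then $F$ must additionally meet $B$ in at least $t$ elements, not all of which lie in $X$. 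So $F\supseteq X\cup\{y\}$ for some $y\in B\setminus X$, giving at least $t+1$ prescribed points.
\item If $X\subseteq A\cap B$, then $|A\cap B|\ge t$ forces us to argue differently.
\end{enumerate}

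A cleaner route for step (v) is the standard one. For each $F\in\mathcal{F}$, either $F\supseteq S'$ for some $t$-set $S'$ contained in every member, which is excluded, or we find a $(t+1)$-set inside $A\cup B$ contained in $F$. Concretely: if every $F$ contained some $t$-subset of $A$ lying in all of $\mathcal{F}$, we would be in Case 1. So, rather than a single $B$, we use the fact that $\mathcal{F}$ is not contained in the star of any $t$-subset of $A$. For each of the ${k\choose t}$ subsets $X\subseteq A$ of size $t$, pick $B_X\in\mathcal{F}$ with $X\not\subseteq B_X$. Any $F\supseteq X$ must then meet $B_X$ in $t$ points, so it contains a point of $B_X\setminus X$. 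Therefore
\[
|\mathcal{F}|\le \sum_{X}\ \sum_{y\in B_X\setminus X}{n-t-1\choose k-t-1}\le {k\choose t}\,k\,{n-t-1\choose k-t-1}.
\]

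To finish, observe that
\[
{n-t\choose k-t}\Big/{n-t-1\choose k-t-1}=\frac{n-t}{k-t}.
\]
Hence the bound in Case 2 is strictly less than ${n-t\choose k-t}$ once $n-t>k(k-t){k\choose t}$. So we set
\[
f(k,t)=t+k(k-t){k\choose t}.
\]
With this choice Case 2 is impossible for a family of size at least ${n-t\choose k-t}$, and Case 1 gives both the bound and the characterisation of equality.

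The main obstacle is making sure the covering in Case 2 is exhaustive. Every $F$ contains some $t$-subset $X$ of $A$ by the $t$-intersection property, and then it contains one of the listed $(t+1)$-sets $X\cup\{y\}$ with $y\in B_X\setminus X$. Both facts are easy, so the argument is routine; the value of $f(k,t)$ obtained this way is far from optimal (Wilson's $n\ge(t+1)(k-t+1)$), but the statement only asserts that some constant exists.
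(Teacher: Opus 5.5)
Your proof is correct. The paper itself does not prove this statement: it is quoted as classical background, and the sharp threshold $f(k,t)=(t+1)(k-t+1)$ is attributed to Frankl and Wilson. Your argument is therefore necessarily a different route from the one the paper relies on.

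\textbf{Comparison with the cited results.} Those results are considerably deeper. Wilson's proof is spectral, working with eigenvalues in the Johnson scheme, and Frankl's is a delicate combinatorial argument. What they buy is the optimal threshold, linear in $k$ for fixed $t$. Your kernel/covering argument is the elementary one in the spirit of the original Erd\H{o}s--Ko--Rado paper, and it is sound:
\begin{enumerate}[$\bullet$]
\item If $|T|<t$, each $t$-subset $X$ of a fixed $A\in\mathcal{F}$ is missed by some $B_X\in\mathcal{F}$.
\item Every $F\in\mathcal{F}$ contains some such $X$ (since $|F\cap A|\ge t$). It also contains a point of $B_X\setminus X$, because $|X\cap B_X|\le t-1<|F\cap B_X|$.
\item Hence $|\mathcal{F}|\le k\binom{k}{t}\binom{n-t-1}{k-t-1}$. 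This is strictly below $\binom{n-t}{k-t}=\frac{n-t}{k-t}\binom{n-t-1}{k-t-1}$ once $n-t>k(k-t)\binom{k}{t}$.
\end{enumerate}
The threshold you obtain is of order $k^2\binom{k}{t}$ rather than linear. Still, your argument is self-contained and proves exactly what the statement asserts, including the characterization of equality. Equality can only occur in Case~1, where it forces the full star through a fixed $t$-set, and the full star trivially attains the bound.

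\textbf{Cleaning up the write-up.}
\begin{enumerate}[$\bullet$]
\item Delete the abandoned steps (iv)--(v).
\item Delete the sentence claiming a $(t+1)$-set inside $A\cup B$ for a single $B$. That is not what you actually prove: your $(t+1)$-sets $X\cup\{y\}$ use a different $B_X$ for each $X$.
\item Drop the preliminary assumption $|\mathcal{F}|\ge\binom{n-t}{k-t}$. Both cases work for an arbitrary $t$-intersecting family.
\end{enumerate}
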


During the years there have been various estimates for $f(k,t)$.
The best one $f(k,t)=(t+1)(k-t+1)$ is due to Frankl~\cite{F78}
and Wilson \cite{W84}. We mention here also the important result by Hilton 
and Milner \cite{HM67} which describes the largest 1-intersection systems 
that are not canonnically 1-intersecting.

\begin{theorem}
\label{thm:HM}
Let $\Omega$ be a set of cardinality $n$ and let $S$ be an 
Erd\H{o}s-Ko-Rado set of $k$-element subsets of $\Omega$,
$k\ge3$, $n\ge2k+1$. If $\mathcal{F}$ is not canonically intersecting
then
\[|\mathcal{F}|\le {n-1\choose k-1}-{n-k-1\choose k-1}+1.\]
Equality holds if and only if
\begin{enumerate}[$\bullet$]
\item $\mathcal{F}$ is is the union of some fixed $k$-subset $F$
and the set of all k-subsets $G$ of $\Omega$ containig some fixed
element $x\not\in F$ and satisfying $F\cap G\ne\varnothing$, or else
\item $k=3$ and $\mathcal{F}$ is the set of all 3-element subsets  
having an intersection of size at least 2 with a fixed 3-subset $F$.
\end{enumerate}
\end{theorem}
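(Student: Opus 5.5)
The plan is to reduce the theorem to an inequality for a pair of cross-intersecting families, then prove that inequality by compression and a one-variable optimization. Let $\mathcal{F}$ be an intersecting family of $k$-subsets of $\Omega$ that is not canonically intersecting, i.e. no element lies in all members of $\mathcal{F}$. We may assume $\mathcal{F}$ is maximal with this property. Choose an element $x$ of maximum degree and split
\[\mathcal{A}=\{F\setminus\{x\}: x\in F\in\mathcal{F}\}, \qquad \mathcal{B}=\{F\in\mathcal{F}: x\notin F\},\]
so $\mathcal{A}\subseteq{\Omega\setminus\{x\}\choose k-1}$, $\mathcal{B}\subseteq{\Omega\setminus\{x\}\choose k}$, and $|\mathcal{F}|=|\mathcal{A}|+|\mathcal{B}|$. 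Since $\mathcal{F}$ is intersecting, every $A\in\mathcal{A}$ meets every $B\in\mathcal{B}$, so $(\mathcal{A},\mathcal{B})$ is cross-intersecting on an $m=n-1$ element ground set. Non-triviality gives $\mathcal{B}\neq\varnothing$, and maximality of the degree of $x$ gives $|\mathcal{A}|\ge|\mathcal{B}|$. The target is therefore the cross-intersecting inequality
\[|\mathcal{A}|+|\mathcal{B}|\le {m\choose k-1}-{m-k\choose k-1}+1,\]
valid for $m\ge 2k$ and $\mathcal{B}\neq\varnothing$. The first Hilton--Milner family is exactly $|\mathcal{B}|=1$ with $\mathcal{A}$ all $(k-1)$-sets meeting the single $B$.

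To prove the inequality, apply the shifting operators $S_{ij}$ simultaneously to $\mathcal{A}$ and $\mathcal{B}$. These preserve sizes, cross-intersection and non-emptiness of $\mathcal{B}$, so we may take both families shifted. By a Kruskal--Katona type argument (Hilton's lemma), we may then replace $\mathcal{A}$ and $\mathcal{B}$ by initial segments of the lexicographic order of the same sizes without destroying cross-intersection. For fixed $|\mathcal{B}|=\beta$, the largest admissible $\mathcal{A}$ consists of all $(k-1)$-sets meeting every member of the lex-initial $\beta$-family; call its size $g(\beta)$. The task becomes maximizing $\beta+g(\beta)$.

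The main computation shows that the increments of $\beta+g(\beta)$ behave convexly along the lex order. Adding the next $k$-set to $\mathcal{B}$ kills at least as many $(k-1)$-sets in $\mathcal{A}$ as it adds, except at the very end. Hence the maximum over the admissible range $1\le\beta\le|\mathcal{A}|$ is attained at an endpoint. At $\beta=1$ we get exactly ${m\choose k-1}-{m-k\choose k-1}+1$. At the other endpoint, where $\mathcal{B}$ is the lex-initial family of all $k$-sets containing two fixed points of a fixed triple, the constraint $|\mathcal{A}|\ge|\mathcal{B}|$ together with $n\ge 2k+1$ keeps the value strictly below the bound when $k\ge4$. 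For $k=3$ it gives a tie, which is precisely the second extremal family: the $3$-sets meeting a fixed $3$-set $F$ in at least two points.

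I expect the main obstacle to be the equality characterization, because shifting does not preserve isomorphism type. The route I would take is to show that equality forces equality in each step of the convexity argument, namely $|\mathcal{B}|=1$, or $k=3$ at the other endpoint. Then I would argue directly on the unshifted family. If $|\mathcal{B}|=\{F\}$, maximality forces $\mathcal{A}$ to be all $(k-1)$-sets meeting $F$, which is the first family. In the $k=3$ case, a short direct analysis of the $3$-sets pairwise meeting, with no common point, identifies the triangle-type family.
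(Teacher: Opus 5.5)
The paper gives no proof of this theorem; it is quoted from Hilton and Milner as background. So there is nothing to compare your route with, and the outline has to stand on its own. Your strategy is legitimate: split at a maximum-degree element $x$, treat $(\mathcal{A},\mathcal{B})$ as a cross-intersecting pair on $m=n-1$ points, pass to lexicographic initial segments by Hilton's lemma, and maximize $\beta+g(\beta)$. However, two steps fail as written.

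\textbf{First gap: the constraint $|\mathcal{A}|\ge|\mathcal{B}|$.} This does not follow from $d(x)$ being maximal. Counting over a fixed member of $\mathcal{F}$ only gives $d(x)\ge|\mathcal{F}|/k$.

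\begin{itemize}
\item \emph{Counterexample.} Take the seven lines of a Fano plane on $7$ of the $n$ points. For every $n\ge 7$ this is a maximal non-trivial intersecting family of $3$-sets in which every point has degree at most $3$. Hence $|\mathcal{A}|\le 3<4\le|\mathcal{B}|$ for every choice of $x$.
\item \emph{The constraint is needed.} With $\mathcal{A}=\{\{1,2\}\}$ and $\mathcal{B}$ all $3$-subsets of $[6]$ meeting $\{1,2\}$, the pair is cross-intersecting with $|\mathcal{A}|+|\mathcal{B}|=1+16=17>13$.
\item \emph{A standard repair.} Shift $\mathcal{F}$ itself while keeping it non-trivial. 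If some $S_{ij}$ would create a common element, then every member meets $\{i,j\}$, and that case is settled by the uniform cross-intersecting bound on $n-2$ points. For shifted $\mathcal{F}$ and $x=1$ one has $\partial\mathcal{B}\subseteq\mathcal{A}$. Katona's theorem $|\partial\mathcal{B}|\ge|\mathcal{B}|$ for intersecting $\mathcal{B}$ then gives $|\mathcal{A}|\ge|\mathcal{B}|$. The cost is that the equality analysis must also undo this shifting of $\mathcal{F}$.
\end{itemize}

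\textbf{Second gap: the optimization.} The claims of ``convex increments'' and ``maximum at an endpoint'' are false.

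\begin{itemize}
\item \emph{Example with $k=3$, $m=6$.} Along $123,124,125,126,134,135,136,145,146,156$ one gets $g(\beta)=12,10,9,9,7,6,6,5,5,5$. So the admissible range is $1\le\beta\le 6$, where $\beta+g(\beta)=13,12,12,13,12,12$.
\item \emph{What the example shows.} The maximum sits at $\beta=1$ and at the interior point $\beta=4=\binom{m-2}{k-2}$. Adding $126$ kills nothing. The actual right endpoint gives only $12$. Beyond it, the values $13,13,14,15$ are excluded solely by $|\mathcal{A}|\ge|\mathcal{B}|$.
\item \emph{General shape.} $\beta+g(\beta)$ has a recursive sawtooth profile with local maxima at the ends of lexicographic blocks. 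For instance, $\mathcal{B}=\{B:[j]\subseteq B\}$ gives $\binom{m}{k-1}-\binom{m-j}{k-1}+\binom{m-j}{k-j}$ for $2\le j\le k$. There are further blocks such as $\{B\ni 1: B\cap\{2,\dots,j\}\neq\varnothing\}$. All of these must be compared with $\beta=1$, and that comparison is the real content of the proof; it is missing.
\item \emph{Misidentified endpoint.} The family $\{B:\{1,2\}\subseteq B\}$ is not the endpoint of the admissible range. There $g=\binom{m-1}{k-2}+\binom{m-2}{k-2}>\beta$, so the constraint plays no role in separating $k=3$ from $k\ge4$. That separation is just the sign of $\binom{m-2}{k-1}-\binom{m-k}{k-1}-\binom{m-2}{k-2}+1$, which is zero for $k=3$ and positive for $k\ge 4$.
\end{itemize}

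\textbf{Equality case.} Your plan here is sound in principle. Hilton's lemma gives $|\mathcal{A}|\le g(|\mathcal{B}|)$ for the original, unshifted pair, so equality pins down $|\mathcal{B}|$: it must be $1$, or, when $k=3$, $n-3$. This only works once the optimization has been done correctly, and the $k=3$ identification is still only sketched.
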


The Erd\H{o}s-Ko-Rado problem has been generalized
to many other structures \cite{GM16}. In 1975 Hsieh \cite{H75}
proved a generalization which is now known as a $q$-analogue of
the Erd\H{o}s-Ko-Rado theorem. There have been many improvements to Hsieh's
result. We give here a theorem which combines the results of Hsieh \cite{H75},
Frankl and Wilson \cite{FW86}, and Tanaka \cite{T06}.
In the next two theorems by dimension we mean projective dimension.

\begin{theorem}
\label{thm:hsieh}
Let $t$ and $k$ be integers with $0\le t\le k$, and
let $\mathcal{F}$ be a set of $(k-1)$-dimensional subspaces in $\PG(n-1,q)$
pairwise intersecting in at least a $(t-1)$-dimensional subspace.
If $n\ge2k$, then
$\displaystyle |\mathcal{F}|\le\gauss{n-t}{k-t}{q}$. Equality holds if and
only if  $\mathcal{F}$ is the set of all $(k-1)$-dimensional subspaces, containing a fixed 
$(t-1)$-dimensional subspace of $\PG(n-1,q)$, 
or, for $n=2k$, $\mathcal{F}$ can be also the set of all
$(k-1)$-dimensional subspaces in a fixed $(2k-t-1)$-dimensional subspace.

In case of $2k-t\le n\le 2k$, we have that
$\displaystyle |\mathcal{F}|\le\gauss{2k-t}{k}{q}$. Equality holds if and only if $\mathcal{F}$
is the set of all $(k-1)$-dimensional subspaces in a fixed
$(2k-t-1)$-dimensional  subspace.
\end{theorem}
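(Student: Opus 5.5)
The plan is to prove the first part (the case $n\ge 2k$) by the linear-algebra method of Frankl and Wilson in the Grassmann association scheme. I would then obtain the characterization of equality from Tanaka's theory of width and dual width, and reduce the range $2k-t\le n\le 2k$ to the first part by duality. Throughout, I identify $(k-1)$-dimensional projective subspaces of $\PG(n-1,q)$ with $k$-dimensional subspaces of $V=\mathbb{F}_q^n$. The hypothesis on $\mathcal{F}$ then says $\dim(X\cap Y)\ge t$ for all $X,Y\in\mathcal{F}$.

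First I set up the Grassmann scheme $J_q(n,k)$, with relations $R_i=\{(X,Y):\dim(X\cap Y)=k-i\}$ for $0\le i\le k$. A $t$-intersecting family is exactly a subset whose inner distribution is supported on $\{0,1,\dots,k-t\}$. Its eigenvalues are given by the $q$-Eberlein (dual $q$-Hahn) polynomials, and its primitive idempotents $E_0,\dots,E_k$ correspond to the filtration $\mathbb{R}^{\binom{V}{k}}\supseteq\cdots$ by images of the inclusion maps $W_{jk}$ from $j$-subspaces to $k$-subspaces.

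Following Frankl--Wilson, I would build a symmetric matrix $M=\sum_{i>k-t} c_iA_i+I$ with the following properties. It vanishes on all pairs in $\mathcal{F}\times\mathcal{F}$ off the diagonal. Its all-ones eigenvalue is $\lambda_0$, and every other eigenvalue satisfies $\lambda_j\ge\lambda_{\min}$, with $1-\lambda_{\min}/\lambda_0\cdot$(appropriate normalization) giving the ratio $\gauss{n-t}{k-t}{q}/\gauss{n}{k}{q}$. Concretely, I would choose $M$ so that $\lambda_1=\dots=\lambda_t=\lambda_{\min}$ and $\lambda_j=0$ for $j>t$, solving a triangular linear system for the $c_i$ using the explicit eigenvalue formulas. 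The weighted Hoffman bound then gives
\[|\mathcal{F}|\le \frac{-\lambda_{\min}}{\lambda_0-\lambda_{\min}}\gauss{n}{k}{q}=\gauss{n-t}{k-t}{q}.\]
The main obstacle is verifying $\lambda_j\ge\lambda_{\min}$ for all $j$, which needs $n\ge 2k$. This is where the nonnegativity and sign pattern of the $q$-Hahn coefficients must be checked carefully.

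For the equality case, equality in the Hoffman bound forces the characteristic vector $\chi_{\mathcal{F}}$ to lie in $E_0+E_1+\dots+E_t$, so $\mathcal{F}$ has dual width at most $t$. It also has width at most $k-t$. Tanaka's inequality $w+w^*\ge k$ is then tight, and his classification of such descendent subsets in $J_q(n,k)$ says that $\mathcal{F}$ is one of two families. It is either all $k$-spaces through a fixed $t$-space, or, and this option is possible only when $n=2k$ for size reasons, all $k$-spaces inside a fixed $(2k-t)$-space.

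For $2k-t\le n\le 2k$, I would pass to orthogonal complements $X\mapsto X^\perp$, which are $(n-k)$-spaces. We have $\dim(X^\perp\cap Y^\perp)=n-2k+\dim(X\cap Y)\ge n-2k+t=:t'$, and $n\ge 2(n-k)$. So the first part applies with $k'=n-k$ and $t'$, giving
\[|\mathcal{F}|\le\gauss{n-t'}{k'-t'}{q}=\gauss{2k-t}{k-t}{q}=\gauss{2k-t}{k}{q}.\]
Dualizing the extremal families, the family "through a fixed $t'$-space" becomes "inside a fixed $(n-t')=(2k-t)$-space". The other option, available when $n=2k'$, i.e.\ $n=2k$, dualizes back to the same two families. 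This yields the stated characterization.
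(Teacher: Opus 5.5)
The paper gives no proof of this theorem. It is quoted from Hsieh, Frankl--Wilson and Tanaka, and your outline follows the route of those sources: a Delsarte/ratio-type bound in the Grassmann scheme, then width and dual width with Tanaka's classification of descendents, then orthogonal complements. Your duality step is correct: $t'=n-2k+t$, $k'=n-k$, $n\ge 2k'$, and $\gauss{2k-t}{k-t}{q}=\gauss{2k-t}{k}{q}$ all check out. The gap is in the one step that carries the theorem, the construction of $M$.

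As you specify it, $M$ does not exist. It has only $t$ free coefficients, one per forbidden relation $R_{k-t+1},\dots,R_k$. Against these you impose the $t-1$ equalities $\lambda_1=\dots=\lambda_t$ and the $k-t$ conditions $\lambda_j=0$ for $j>t$. Already for $t=1$ you have $M=I+cA_k$, with $A_k$ the adjacency matrix of the $q$-Kneser graph. Its eigenvalues are $\theta_j=(-1)^jq^{k(k-j)+\binom{j}{2}}\gauss{n-k-j}{k-j}{q}$. For $k\ge 3$ this gives $\theta_2>0>\theta_3$, so $1+c\theta_2=0=1+c\theta_3$ is impossible.

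What the method actually needs, phrased for $A=M-I$ (zero diagonal, supported on the forbidden relations, eigenvalues $\theta_j$), is only $\theta_1=\dots=\theta_t$. These $t-1$ equations leave essentially one scalar of freedom. The canonical family has characteristic vector in $E_0+\dots+E_t$, so evaluating $\chi^{T}A\chi=0$ on it forces $-\theta_1/(\theta_0-\theta_1)=\gauss{n-t}{k-t}{q}/\gauss{n}{k}{q}$. The entire content of the bound is therefore the inequality $\theta_j\ge\theta_1$ for $t<j\le k$, and this is where $n\ge 2k$ enters. Under your prescription that inequality would be vacuous, yet you also name it as the main obstacle; either way it is never verified.

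The equality case needs more: strict inequality $\theta_j>\theta_1$ for every $j>t$, including at $n=2k$. Without it, equality in the ratio bound only places $\chi_{\mathcal F}$ in $E_0$ plus the eigenspaces where $\theta_j=\theta_1$. You then cannot conclude $w^*\le t$, and Tanaka's classification does not apply. A smaller slip: for a matrix with unit diagonal the bound reads $\frac{1-\lambda_{\min}}{\lambda_0-\lambda_{\min}}\gauss{n}{k}{q}$, not $\frac{-\lambda_{\min}}{\lambda_0-\lambda_{\min}}\gauss{n}{k}{q}$.

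Finally, at $n=2k$ your duality argument returns both extremal families, because $\gauss{2k-t}{k-t}{q}=\gauss{2k-t}{k}{q}$. That is correct, but it contradicts the uniqueness asserted in the second half of the statement, which holds only for $2k-t\le n<2k$. You should point this out rather than say that the argument yields the stated characterization.
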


In what follows, we shall also need a result proved recently by Tanaka~\cite{T06}.
Consider the projective geometry $\PG(n-1,q)$. Let $d, e$ be two integers
with $d+e=n-1$, $e\ge d$. Fix a subspace $W$ with $\dim W=e-1$ and
denote by $\mathcal{U}$ the set of all subspaces $\gamma$ in $\PG(n-1,q)$
with $\dim\gamma=d-1$, $\gamma\cap W=\varnothing$.

\begin{theorem}
\label{thm:tanaka}
Let $0\le t\le d$ be an integer and let $\mathcal{F}$
be a family of subspaces from $\mathcal{U}$
with $\dim(\gamma\cap\delta)\ge t-1$ for every two $\gamma, \delta\in\mathcal{U}$.
Then
\[|\mathcal{F}|\le q^{(d-t)e}\]
and equality holds if

\begin{enumerate}[(a)]
\item $\mathcal{F}$ consists of all subspaces $\gamma$
through a fixed $(t-1)$-dimensional subspace $U$ with
$U\cap W=\varnothing$;
\item in case of $e=d$, $\mathcal{F}$ is the set of all elements of 
$\mathcal{U}$ contained in a fixed $(2d-t-1)$-dimensional subspace $U$
with $\dim U\cap W=d-t-1$.
\end{enumerate}
\end{theorem}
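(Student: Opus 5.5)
The plan is to translate the statement into the language of matrices and then use a simple anticode-times-code counting argument. I take $\dim$ to mean projective dimension throughout, and I use the natural reading of the hypothesis: the condition $\dim(\gamma\cap\delta)\ge t-1$ is imposed on $\gamma,\delta\in\mathcal{F}$.

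\textbf{Coordinates.} Pass to the underlying vector space $V=\mathbb{F}_q^{\,n}=\mathbb{F}_q^{\,d}\oplus\mathbb{F}_q^{\,e}$, with $W$ the $e$-dimensional vector subspace $0\oplus\mathbb{F}_q^{\,e}$. A $d$-dimensional vector subspace $\gamma$ with $\gamma\cap W=0$ projects isomorphically onto the first summand. Hence it is the graph $\{(x,xA):x\in\mathbb{F}_q^{\,d}\}$ of a unique $d\times e$ matrix $A$. This gives a bijection $\mathcal{U}\leftrightarrow M_{d\times e}(\mathbb{F}_q)$, so $|\mathcal{U}|=q^{de}$.

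If $\gamma,\delta$ correspond to $A,B$, then $\gamma\cap\delta=\{(x,xA): x(A-B)=0\}$. Its vector dimension is $d-\operatorname{rk}(A-B)$. So the intersection condition is equivalent to $\operatorname{rk}(A-B)\le d-t$ for all $A,B$ in the family. In other words, $\mathcal{F}$ becomes a set of matrices of diameter at most $d-t$ in the rank metric.

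\textbf{The upper bound.} Take a rank-metric code $C\subseteq M_{d\times e}(\mathbb{F}_q)$ with minimum rank distance at least $d-t+1$ and $|C|=q^{et}$. Such a code exists because $e\ge d$: the Gabidulin (maximum rank distance) codes meet the Singleton-type bound $q^{e(d-\delta+1)}$ with $\delta=d-t+1$.

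Now consider the map $\mathcal{F}\times C\to M_{d\times e}(\mathbb{F}_q)$ given by $(A,c)\mapsto A+c$. It is injective. Indeed, if $A+c=A'+c'$, then $c-c'=A'-A$ has rank at most $d-t$. Since distinct codewords differ in rank at least $d-t+1$, this forces $c=c'$, and then $A=A'$. Therefore
\[|\mathcal{F}|\cdot q^{et}\le q^{de},\qquad\text{so}\qquad |\mathcal{F}|\le q^{(d-t)e}.\]
The only nontrivial input here is the existence of the MRD code. I expect that to be the main obstacle if it must be proved from scratch. In that case I would give Gabidulin's construction: identify $\mathbb{F}_q^{\,e}$ with $\mathbb{F}_{q^e}$, and take the $q$-linearized polynomials of $q$-degree less than $t$, restricted to a $d$-dimensional $\mathbb{F}_q$-subspace. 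A nonzero such polynomial has a kernel of $\mathbb{F}_q$-dimension at most $t-1$, hence rank at least $d-t+1$, and there are exactly $q^{et}$ of them.

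\textbf{The extremal examples.} It remains to check that families (a) and (b) have exactly $q^{(d-t)e}$ members, since they are clearly $t$-intersecting.

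For (a), choose the splitting so that $U$ is the graph of the zero map restricted to a fixed $t$-dimensional vector subspace $X\le\mathbb{F}_q^{\,d}$. Then the members of the family are the matrices $A$ with $XA=0$. These form a space of dimension $(d-t)e$ over $\mathbb{F}_q$.

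For (b), with $e=d$, the vector space $U$ has dimension $2d-t$ and meets $W$ in a $(d-t)$-dimensional space. Choose coordinates so that $U=\mathbb{F}_q^{\,d}\oplus Y$ with $\dim Y=d-t$. Then $\gamma\subseteq U$ exactly when the row space of $A$ lies in $Y$. There are $q^{d(d-t)}=q^{(d-t)e}$ such matrices, and any two of them differ by a matrix of rank at most $d-t$, as required.
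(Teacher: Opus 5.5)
Your proof is correct. It necessarily takes a different route from the paper, because the paper gives no proof of this statement: it quotes it from Tanaka's work on width and dual width in the Grassmann and bilinear forms graphs.

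You identify $\mathcal{U}$ with $M_{d\times e}(\mathbb{F}_q)$ via graphs of linear maps, which turns the intersection condition into rank diameter at most $d-t$. The bound then follows from the code--anticode product argument against a Gabidulin MRD code. The sizes of (a) and (b) come from choosing a complement of $W$ adapted to $U$: in (a) you extend $U$ to such a complement, and in (b) you take a complement of $U\cap W$ inside $U$. This is short and elementary, and it proves exactly what is printed: the bound, and that (a) and (b) attain it.

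What your argument does not give is the converse, that every family of size $q^{(d-t)e}$ is of type (a) or (b). Injectivity of $(A,c)\mapsto A+c$ only shows that an extremal $\mathcal{F}$ tiles $M_{d\times e}(\mathbb{F}_q)$ with every such MRD code, which is far from a classification. That converse is the real content of Tanaka's theorem, and the paper relies on it later. It is used in the base case $m=1$ and in the step ``hence they are all subspaces through a fixed $(t-1)$-dimensional subspace'' in the proof of Theorem~\ref{thm:Tanaka}, and in the corresponding step of Theorem~\ref{thm:A}.

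Two small points. First, you tacitly take $n=d+e$. That is the right normalization; the paper's $d+e=n-1$ must be a slip, since with it the family $\mathcal{F}=\mathcal{U}$ at $t=0$ already has $q^{de}\gauss{d+1}{1}{q}>q^{de}$ members. Second, the count $|C|=q^{et}$ needs distinct linearized polynomials to give distinct maps on the $d$-dimensional subspace. This follows from your rank bound because $d-t+1\ge 1$; for $t=0$ just take $C=\{0\}$.
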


In this note we prove analogues of the Erd\H{o}s-Ko-Rado theorem for 
intersecting families of subspaces in projective Hjelmslev geometries over finite chain rings.

\section{Preliminaries}

In this section we recall some basic facts on finite chain rings, modules over finite chain rings and finite projective Hjelmslev geometries. For a more in-depth introduction we refer to \cite{TYL99,McD74,N08}.

\subsection{Finite Chain Rings}

A ring (associative with identity $1\ne0$) is called a chain ring
if the lattice of its left (right) ideals ordered by inclusion is a chain.
Though chain rings are not necessarily commutative, it is
known that every left ideal is also a right ideal.
The largest non-trivial ideal in $R$ will be denoted hence forth by $N$.
The chain of the right ideals of $R$ can be written as
\[R>N>\ldots>N^{m-1}>(0).\]
The ideals $N^i$ are generated by the powers of a single element $\theta$,
where $\theta$ can be taken as any element in $N\setminus N^2$.
Thus $N^i=R\theta^i=\theta^iR$.
The integer $m$ is the nilpotency index or the length of the chain ring $R$.
Clearly, $R/N\cong\mathbb{F}_q$ for some prime power $q$, and
$|R|=q^m$. Throughout this paper $q, m, N$, and $\theta$ will be used with this
meaning.

Given a chain ring $R$, we fix a set with the property
that no two of its elements are congruent modulo $N$:
\[\Gamma=\{\gamma_0=0,\gamma_1=1,\gamma_2,\ldots,\gamma_{q-1}\},\]
$\gamma_i\in R$, $\gamma_i\not\equiv\gamma_j\pmod{N}$. For fixed $\Gamma$ and 
$\theta$, every element $r\in R$  is represented uniquely as
\[r=r_0+r_1\theta+\ldots+r_{m-1}\theta^{m-1},\]
where $r_0,r_1,\ldots,r_{m-1}$ are from $\Gamma$. By $\eta_i$, $0\le i\le m-1$,
we denote the natural homomorphisms
\[\eta_i:\left\{
\begin{array}{ccc}
R &\to & R/N^i \\
\sum_{j=0}^{m-1} r_j\theta^j &\to & \sum_{j=0}^{i-1} r_j\theta^j + N^i
\end{array}\right..\]

The simplest non-trivial chain rings are those withe one non-trivial ideal, 
the chain rings with $m=2$. In this case we have a complete classification of these 
rings. They are:

\begin{enumerate}[(a)]
\item for every $\sigma\in\Aut(\mathbb{F}_q)$ the ring of $\sigma$-dual numbers
over $\mathbb{F}_q$ with elements the pairs in $\mathbb{F}_q\times\mathbb{F}_q$, component-wise addition, and multiplication  defined by
$(x_0,x_1)(y_0,y_1)=(x_0y_0,x_0y_1+x_1\sigma(y_0))$;
\item the Galois ring $\GR(q^2,p^2)=\mathbb{Z}_{p^2}[x]/(f(x))$ where
$f(x)\in\mathbb{Z}_{p^2}[x]$ is a monic polynomial of degree $r$ which is 
irreducible modulo $p$.
\end{enumerate}

In what follows $q, m, N \theta, \eta_i $ will have the
meaning fixed in this section.

\subsection{Modules over Finite Chain Rings}

Let $R$ be a finite chain ring with $|R|=q^m$, where $R/N\cong\mathbb{F}_q$.
The structure of the finitely generated modules over $R$ is well-known
(cf. e.g. \cite{Lang}, Ch. 15, \S2).
It is given by the following theorem.

\begin{theorem}
\label{thm:modules}
Let $R$ be a finite chain ring of length $m$.
For any finite module ${}_RM$ there exists a uniquely 
determined sequence $\lambda=(\lambda_1,\lambda_2,\ldots,\lambda_k)$ with
\[m\ge\lambda_1\ge\lambda_2\ge\ldots\ge\lambda_k\ge1\]
such that ${}_RM$ is a direct sum of cyclic modules:
\[{}_RM\cong R/(\rad{R})^{\lambda_1}\oplus\ldots\oplus R/(\rad{R})^{\lambda_k}.\]
\end{theorem}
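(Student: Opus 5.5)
We prove Theorem~\ref{thm:modules} as the analogue, for chain rings, of the structure theorem for finitely generated modules over a principal ideal domain. The plan has three stages. First, an induction on $|M|$ produces a cyclic direct summand of maximal order. Iterating this gives a decomposition into cyclic modules. Finally, the sequence $\lambda$ is recovered from invariants of $M$, which gives uniqueness. Throughout we use that $\rad R=N=R\theta=\theta R$ and that every element of $R$ is either a unit or lies in $N$.

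\emph{Existence.} Every cyclic module $Rx$ is isomorphic to $R/\mathrm{Ann}(x)$. The left ideal $\mathrm{Ann}(x)$ is some $N^{\lambda}$ with $0\le\lambda\le m$, so $Rx\cong R/N^{\lambda}$, and we call $\lambda$ the order of $x$. Choose $x\in M$ of maximal order $\lambda_1$; then $N^{\lambda_1}M=0$. The key step is to show that $Rx$ is a direct summand of $M$. We do this by showing that $R/N^{\lambda_1}$ is self-injective and that $M$ is naturally a module over $R/N^{\lambda_1}$. For self-injectivity we use Baer's criterion. Since the ideals of $\bar R=R/N^{\lambda_1}$ form the chain $\bar N^i$, we only need to extend a homomorphism $f:\bar\theta^i\bar R\to\bar R$. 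The element $f(\bar\theta^i)$ is annihilated by $\bar N^{\lambda_1-i}$, so it lies in $\bar N^{i}$ and equals $\bar\theta^i r$ for some $r$. It follows that $f$ is right multiplication by $r$, which extends to all of $\bar R$. Some care with left and right multiplication is needed here because $R$ may be noncommutative. The convenient fact is $\theta R=R\theta$, so $\theta$ normalizes $R$ and annihilators are two-sided ideals. Injectivity then gives a retraction $M\to Rx$, so $M=Rx\oplus M'$ with $|M'|<|M|$. Induction applied to $M'$ yields the decomposition, and its orders are all at most $\lambda_1$ because $N^{\lambda_1}M'=0$.

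The fiddliest point is this injectivity step in the noncommutative setting. An alternative that avoids Baer's criterion is a direct lifting argument in the style of the PID proof. Pick a complement $\bar M'$ of $\overline{Rx}$ in $M/Rx$, decomposed by induction into cyclic modules with generators $\bar y_j$. Each $\bar y_j$ has some order $\mu_j$, so $\theta^{\mu_j}y_j\in Rx$, say $\theta^{\mu_j}y_j=a_jx$. Maximality of $\lambda_1$ forces $a_j\in N^{\mu_j}$. Writing $a_j=\theta^{\mu_j}b_j$ and replacing $y_j$ by $y_j-b_jx$ produces lifts of the same order, and these lifts span a complement to $Rx$. The identity $a\theta^{\mu}=\theta^{\mu}a'$ is what makes this rewriting possible.

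\emph{Uniqueness.} For $0\le i<m$ consider the $\mathbb{F}_q$-space $N^iM/N^{i+1}M$; its dimension is determined by $M$ alone. We compute it on the decomposition. For the summand $R/N^{\lambda}$ we have $N^i(R/N^{\lambda})/N^{i+1}(R/N^{\lambda})\cong N^i/N^{i+1}\cong\mathbb{F}_q$ when $i<\lambda$, and this quotient is $0$ otherwise. Hence
\[\dim_{\mathbb{F}_q}N^iM/N^{i+1}M=\#\{j:\lambda_j>i\}.\]
These numbers for $i=0,\dots,m-1$ determine the partition $\lambda$, which completes the proof.
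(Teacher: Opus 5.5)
Your proof is correct, and it goes further than the paper. The paper gives no argument for this statement. It only refers to Lang for the standard structure theory, i.e.\ the analogue of the classification of finitely generated torsion modules over a principal ideal domain. The classical proof of the primary case is essentially the lifting argument you give as your alternative. That theorem does not apply verbatim here, since a chain ring with $m\ge 2$ has zero divisors and need not be commutative. The citation therefore leaves it to the reader to check that the classical proof transfers, and your write-up makes that check explicit.

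Your main route is genuinely different from lifting. You isolate the one ring-theoretic fact that makes the splitting work: $R/N^{\lambda_1}$ is left self-injective. You verify this by Baer's criterion together with the annihilator computation $\{y\in\bar R:\bar N^{\lambda_1-i}y=0\}=\bar N^{i}$; this reflects the fact that finite chain rings are Frobenius. Once you have it, $Rx\cong R/N^{\lambda_1}$ splits off from any $R/N^{\lambda_1}$-module containing it. This is shorter and explains why cyclic submodules of maximal order are summands. The lifting argument, by contrast, is more elementary and needs no injectivity theory.

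For uniqueness, your invariants $\dim_{\mathbb{F}_q}N^iM/N^{i+1}M=\#\{j:\lambda_j>i\}=\lambda'_{i+1}$ recover exactly the dual shape that the paper uses in its counting formula, so they fit the paper's framework well.

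One cosmetic point concerns the alternative. The phrase ``a complement of $\overline{Rx}$ in $M/Rx$'' does not make sense, because the image of $Rx$ in $M/Rx$ is zero. You simply mean to decompose $M/Rx$ itself by induction and then lift its cyclic generators.
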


We call the sequence $\lambda=(\lambda_1,\ldots,\lambda_k)$ 
the shape  of ${}_RM$. For $\lambda$ we shall also the notation
$m^{a_m}(m-1)^{a_{m-1}}\cdots 1^{a_1}$, where $a_i$ is the number of $\lambda_j$'s 
that are equal to $i$. A module of shape $m^k$ is called a free module. 
The sequence $\lambda'=(\lambda_1',\ldots,\lambda_m')$, where
$\lambda_i'$ is the number of $\lambda_j$'s with
$\lambda_j\ge i$ is called the dual shape of ${}_RM$.
The largest index $k$ with $\lambda_k>0$
is called the rank of ${}_RM$ and is equal to the rank of the 
smallest free module containing ${}_RM$.
The integer $\lambda_m'$ is called the free rank
of ${}_RM$. It is equal to the rank of the largest free submodule contained in ${}_RM$.

There exists a counting formula for the number of submodules of given shape $\mu$ 
contained in a module of fixed shape $\lambda$. It is given by the following theorem.

\begin{theorem}
\label{thm:counting}
Let $R$ be a chain ring of length $m$
with residue field of order $q$.
Let ${}_RM$  be an $R$-module of shape 
$\lambda=(\lambda_1,\ldots,\lambda_n)$. 
For every sequence
$\mu=(\mu_1,\ldots,\mu_n)$, $\mu_1\ge\ldots\ge\mu_n\ge0$, 
satisfying $\mu\le\lambda$ 
(i.e. $\mu_i\le \lambda_i$  for all $i$)
the module ${}_RM$ has exactly
\[\gauss{\lambda}{\mu}{q,m}=\prod_{i=1}^m q^{\mu_{i+1}'(\lambda_i'-\mu_i')}\cdot
\gauss{\lambda_i'-\mu_{i+1}'}{\mu_i'-\mu_{i+1}'}{q}\]
submodules of shape $\mu$. 
Here
\[\gauss{n}{k}{q}=\frac{(q^n-1)\ldots(q^{n-k+1}-1)}{(q^k-1)\ldots(q-1)}.\]
are the Gaussian coefficients.
\end{theorem}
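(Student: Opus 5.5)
The plan is to prove the formula by induction on the nilpotency index $m$, peeling off one socle layer at a time. The product structure
\[\gauss{\lambda}{\mu}{q,m}=\Bigl(q^{\mu_2'(\lambda_1'-\mu_1')}\gauss{\lambda_1'-\mu_2'}{\mu_1'-\mu_2'}{q}\Bigr)\cdot\prod_{i=2}^m(\cdots)\]
suggests this. The factor for $i=1$ should count how a submodule is rebuilt from its image in a quotient of "length one less". The remaining product is exactly the same formula for the shapes $\bar\lambda=(\lambda_1-1,\ldots)$ and $\bar\mu=(\mu_1-1,\ldots)$, whose dual shapes are $(\lambda_2',\ldots,\lambda_m')$ and $(\mu_2',\ldots,\mu_m')$.

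\textbf{Setting up the recursion.} Let $S=\{x\in M:\theta x=0\}$ be the socle of $M$. It is an $\mathbb{F}_q$-space of dimension $\lambda_1'$, and $M/S$ has shape $\bar\lambda$. Using Theorem~\ref{thm:modules}, one checks that for a submodule $U$ of shape $\mu$ we have $\dim(U\cap S)=\mu_1'$, and that $(U+S)/S\cong U/(U\cap S)$ has shape $\bar\mu$. This gives a map
\[\Phi:\ U\mapsto (U+S)/S\]
from submodules of $M$ of shape $\mu$ to submodules of $M/S$ of shape $\bar\mu$. By the induction hypothesis (the $R$-structure on $M/S$ factors through $R/N^{m-1}$ in the relevant sense, or one simply runs the counting argument for modules annihilated by $N^{m-1}$), the codomain has $\prod_{i\ge2}(\cdots)$ elements. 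The base case $m=1$ is the classical count of subspaces of a vector space.

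\textbf{The fibre count, which is the main obstacle.} Fix $\bar V\le M/S$ of shape $\bar\mu$ and let $W\supseteq S$ be its full preimage. I need to show that the number of $U\le W$ of shape $\mu$ with $U+S=W$ equals
\[q^{\mu_2'(\lambda_1'-\mu_1')}\gauss{\lambda_1'-\mu_2'}{\mu_1'-\mu_2'}{q},\]
independently of $\bar V$. The count goes in two steps.

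First, the subspace $T=U\cap S$ must contain a subspace $T_0\le S$ of dimension $\mu_2'$ that depends only on $W$. I would take $T_0$ to be the socle-part forced by the elements of $\bar V$ of "height $\ge 2$". Concretely, $T_0=\theta^{\,j}$-images of suitable generators, i.e.\ the part of $\theta U$ lying in $S$ coming from the top layer; this is where care is needed. The choices of $T$ with $T_0\le T\le S$ and $\dim T=\mu_1'$ number $\gauss{\lambda_1'-\mu_2'}{\mu_1'-\mu_2'}{q}$.

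Second, given $T$, one chooses a generating set $x_1,\ldots,x_{\mu_2'}$ of $U$ modulo $T$, lifting chosen generators of $\bar V$ that are not killed by $\theta$. Each lift is determined modulo $T$ up to an element of $S/T$, and the resulting $U$ depends only on the induced homomorphism from the $\mu_2'$-dimensional top layer into $S/T$. This gives $q^{\mu_2'(\lambda_1'-\mu_1')}$ distinct submodules. The hard part is verifying that this correspondence is bijective: distinct homomorphisms give distinct $U$, and every $U$ with the required properties has shape exactly $\mu$ rather than a degenerate one. I would settle this by choosing a basis of $M$ adapted to Theorem~\ref{thm:modules} and writing $U$ in a reduced echelon-type form.

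Multiplying the fibre size by the induction count then yields the stated product.
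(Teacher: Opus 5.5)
The paper gives no proof of this formula. It quotes it as a known result: it is the classical Birkhoff--Delsarte count of subgroups of a finite abelian $p$-group, transferred to chain rings. So there is no internal argument to compare yours with. Your socle-peeling induction is the standard route, and its skeleton is right: $S$ has dimension $\lambda_1'$, $M/S$ is an $R/N^{m-1}$-module of shape $\bar\lambda$ with $\bar\lambda_i'=\lambda_{i+1}'$, $\Phi$ lands in shape $\bar\mu$, and the product over $i\ge 2$ is exactly the inductive count.

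The gap is the one substantive step, the fibre count, which you only announce (``this is where care is needed'', ``I would settle this by \ldots\ echelon-type form''). It closes in a few lines without echelon forms.

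First, $U+S=W$ and $NS=0$ give $NU=NW$. Hence $T_0:=NW\cap S=\operatorname{soc}(NU)$ depends only on $W$, has dimension $\bar\mu_1'=\mu_2'$, and lies in $T=U\cap S$. This is the precise definition your vague $T_0$ needs.

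Second, fix any $T$ with $T_0\le T\le S$ and $\dim T=\mu_1'$, a minimal generating set $\bar y_1,\ldots,\bar y_{\mu_2'}$ of $\bar V$, and arbitrary lifts $x_j\in W$. Any relation $\sum r_j\bar y_j=0$ has all $r_j\in N$ (reduce modulo $N\bar V$). Hence $\sum r_jx_j\in NW\cap S=T_0\subseteq T$, so $U:=T+\sum_j Rx_j$ satisfies $U\cap S=T$ and $U+S=W$. A shape is determined by the dimension of the socle together with the shape of the quotient by the socle. Since $\operatorname{soc}U=T$ and $U/T\cong\bar V$, $U$ has shape exactly $\mu$, and no degenerate case can arise.

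Every $U$ in the fibre arises this way: take the $x_j$ inside $U$. Two choices give the same $U$ iff $x_j'-x_j\in U\cap S=T$ for all $j$. So each admissible $T$ carries exactly $|S/T|^{\mu_2'}=q^{\mu_2'(\lambda_1'-\mu_1')}$ submodules. This also shows that the fibre size is independent of $\bar V$, and that every $T\supseteq T_0$ actually occurs. Your count of the $T$'s by $\gauss{\lambda_1'-\mu_2'}{\mu_1'-\mu_2'}{q}$ silently assumed the latter.

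One sentence in your sketch is wrong as written. You must lift \emph{all} $\mu_2'$ minimal generators of $\bar V$, including those annihilated by $\theta$ in $M/S$; these come from the parts $\mu_j=2$. It is their lifts that are the generators of $U$ not killed by $\theta$. Lifting only the generators of $\bar V$ not killed by $\theta$ would give the exponent $\mu_3'$ instead of $\mu_2'$, since a generating set adapted to a cyclic decomposition of $\bar V$ has only $\mu_3'$ of those.
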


\subsection{Projective Hjelmslev Geometries}

The projective geometries over finite chain rings
(or projective Hjelmslev geometries) are defined in the same way one defines
the classical geometries $\PG(n-1,q)$ over finite fields.
Let $M={}_RR^{n}$, $n\ge3$. Let $\mathcal{P}$ be 
the set of all free rank 1 submodules of $M$, and let
$\mathcal{L}$ be the set of free rank 2 submodules of $M$.
The incidence structure $\Sigma=(\mathcal{P},\mathcal{L})$ with incidence $I$
given by set-theoretical inclusion is called the (left) 
$n$-dimensional projective Hjelmslev geometry.
We denote it by $\PHG(n-1,R)$. Note that two points might be incident with more than one line. Similarly two points can meet in more than one point.

A set of points $\mathcal{S}$ is called a subspace if $P,Q\in\mathcal{S}$ implies  that $\mathcal{S}$ the points of at least one line through $P$ and $Q$.The subspaces of $\PHG(n-1,R)$ are the pointsets associated with the submodules of frank greater than 1.
Subspaces associated with the
free submodules of ${R}R^n$ are called Hjelmslev subspaces. 
Note that the intersection of two Hjelmslev subspaces is a subspace
(or empty), but not necessarily an Hjelmslev subspace.The
projective dimension of a Hjelmslev
subspace $S$ associated with a module of shape $m^s$ is $\dim S=s-1$. 
The shape of a subspace is the shape of the underlying submodule. 

Two points
 $P=Rx$ and $Q=Ry$ are said to be neighbours (notation $P\eye_i Q$)
 if $\eta_i(x)=\eta_i(y)$. More genrally, two subspaces $U$, $V$ of the same shape
 are $i$-neighbors if $\eta_i(U)=\eta_i(V)$. For every subspace $U$, we denote by
 $[U]^{(i)}$ the set of all points that are $i$-neighbors to some point on $U$.
  
The geometries $\PHG(n-1,R)$ have a nice nested structure which is decribed as follows.
Let $\Sigma=(\mathcal{P},\mathcal{L})=\PHG(n-1,R)$. Fix a Hjelmslev subspace $S$ of
(projective) dimension $s-1$. Set
\[\tilde{\mathcal{P}}=\{T\cap[P]^{(m-i)} \mid T\eye_iS,T\cap[P]^{(m-i)}\ne\varnothing\}.\]
Further denote by $\mathcal{L}(S)$ the set of all lines from $\mathcal{L}$
that are contained as a set of points in some Hjelmslev subspace $T$ with $T\eye_i S$.
Further define incidence $\mathfrak{I}\subset\mathfrak{P}\times\mathcal{L}(S)$ by
$(T\cap[P]^{(m-i)},L)\in\mathfrak{I}$ iff $T\cap[P]^{(m-i)}\cap L\ne\varnothing$.
There exist lines in $\mathcal{L}(S)$ that are incident with same sets of points 
from $\mathfrak{P}$. Denote by  
$\mathfrak{L}$ a maximal family of lines from $\mathcal{L}(S)$ that are different
as subsets of $\mathfrak{P}$.

\begin{theorem}
\label{thm:factor_structure}
The incidence structure $(\mathfrak{P},\mathfrak{L},\mathfrak{I})$ can be imbedded isomorphically into
$\PHG(n-1,R/\rad^{m-i}R)$. The missing part is a subspace of shape 
${m-i}^{n-s}(m-i-1)^{s}$ (i.e. a neighbour class $[U]^{(1)}$ where $U$
is a Hjelmslev subspace of $\PHG(n-1,R/\rad^{m-i}R)$ of dimension $n-s-1$).
\end{theorem}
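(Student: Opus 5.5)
We want to show that the incidence structure $(\mathfrak{P},\mathfrak{L},\mathfrak{I})$ embeds into $\PHG(n-1,R/\rad^{m-i}R)$, and that the points missed form a neighbour class $[U]^{(1)}$ of shape $(m-i)^{n-s}(m-i-1)^s$. The plan is to use coordinates adapted to $S$ and to build the embedding explicitly by reduction modulo $N^{m-i}$ of suitably rescaled coordinates.

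\textbf{Step 1: coordinates adapted to $S$.} Choose a basis of ${}_RR^n$ with $S=Re_1+\dots+Re_s$. A Hjelmslev subspace $T$ with $T\eye_i S$ (same shape $m^s$ and $\eta_i(T)=\eta_i(S)$) then has a generator matrix $(I_s\mid \theta^i A)$, with $A$ an $s\times(n-s)$ matrix over $R$. A point $P=Rx\in T$ has $x=(u,\theta^i uA)$ with $u$ a non-zero-divisor row vector, i.e.\ $\eta_1(u)\ne 0$. The class $[P]^{(m-i)}$ consists of the points $Ry$ with $\eta_{m-i}(y)=\eta_{m-i}(x)$, after normalising by a unit.

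\textbf{Step 2: define the map.} Send $T\cap[P]^{(m-i)}$ to the point of $\PHG(n-1,R/N^{m-i})$ with coordinates
\[
\bigl(\theta^{-?}\text{-free part}\bigr)\colon\quad (\eta_{m-i}(u)\,;\,\eta_{m-i}(uA)).
\]
Concretely, write $\theta^i uA=\theta^i w$ and record $(\eta_{m-i}(u),\eta_{m-i}(w))$. One must verify that $w$ is determined modulo $N^{m-i}$. This uses that $\theta^i w$ is determined modulo $N^m$ and that $\theta^i r=0$ iff $r\in N^{m-i}$. Hence the map is well defined on the classes in $\mathfrak{P}$, and it is injective, since the pair determines the intersection $T\cap[P]^{(m-i)}$.

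\textbf{Step 3: compute the image.} The image is exactly the set of points whose first $s$ coordinates contain a unit. Its complement is the set of points $(\bar u,\bar w)$ with $\bar u\in N/N^{m-i}$, which is the neighbour class $[U]^{(1)}$ of $U=\langle e_{s+1},\dots,e_n\rangle$ over $R/N^{m-i}$. This set is associated with the submodule $N\bar R^{s}\oplus\bar R^{n-s}$, of shape $(m-i)^{n-s}(m-i-1)^s$, as claimed.

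\textbf{Step 4: lines go to lines.} A line of $\mathcal{L}(S)$ lies in some $T$, so it is spanned by two points $(u_j,\theta^i u_jA)$. Its image is the set of points in the span of the two image vectors, which is a line of the quotient geometry, and incidence is preserved because the map is linear in $u$.

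\textbf{Main obstacle.} I expect the main difficulty to be in Step 4: showing that every line of the quotient geometry not inside the missing part arises from some line in $\mathcal{L}(S)$. For this, given two image points, I would lift $\bar w$ to $w$ and solve $uA=w$ for a common matrix $A$ for both points. This is possible because the $u_j$ are linearly independent with unit minors, so $A$ can be chosen freely on a complementary basis. The same argument shows that lines identified in $\mathfrak{L}$ have equal images, which is needed for the incidence to be well defined.
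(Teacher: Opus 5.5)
Your route is the paper's own: coordinates with $S$ given by $x_{s+1}=\dots=x_n=0$, each $T\in[S]^{(i)}$ written as the graph of a matrix divisible by $\theta^i$, and points of $[S]^{(i)}$ parametrized by a non-torsion $u\in R^s$ together with the tail. The paper stops after writing down these coordinates; you go further by defining the map, identifying the image and checking lines. One step, however, fails as written, because the paper allows non-commutative chain rings such as the $\sigma$-dual numbers. For commutative $R$ your formulas are fine.

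For left modules, a point of the row space of $(I_s\mid B)$ is $R(u,uB)$, not $R(u,\theta^i uA)$. The rule $R(u,\theta^i w)\mapsto(\eta_{m-i}(u):\eta_{m-i}(w))$ is not compatible with left scalars, since $\lambda\theta^i=\theta^i\mu$ with $\mu\neq\lambda$ in general. Take the $\sigma$-dual numbers with $m=2$, $i=1$: one has $\lambda\theta=\theta\mu$ with $\mu_0=\sigma^{-1}(\lambda_0)$. So the representatives $(u,\theta w)$ and $\lambda(u,\theta w)$ of the same point are sent to $(\bar u:\bar w)$ and $(\bar u:\lambda_0^{-1}\sigma^{-1}(\lambda_0)\bar w)$, which differ when $\bar w\neq 0$ and $\sigma(\lambda_0)\neq\lambda_0$.

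The repair is the paper's convention. Take $T$ to be the row space of $(I_s\mid A\theta^i)$ and write the tail of a point as $w\theta^i$. Then $\lambda(w\theta^i)=(\lambda w)\theta^i$, and $w$ is determined modulo the left annihilator of $\theta^i$, which is again $N^{m-i}$.

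With this convention, also make your injectivity claim explicit, since the converse half of incidence preservation in Step 4 rests on it. For $P=R(u,uA\theta^i)\in T$ and $w=uA$ one gets
\[
T\cap[P]^{(m-i)}=\{R(u+z,\,w\theta^i)\colon z\in(N^{m-i})^s\},
\]
because $zA\theta^i=0$. So the class depends only on $(\eta_{m-i}(u),\eta_{m-i}(w))$ up to a unit, and not on $T$. Now suppose the image of a class lies on the image of a line $L\subseteq T$. Lifting the coefficients gives a point of $L$ with the same image, and by the formula that point lies in the class.

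Second, your ``main obstacle'' is misstated: not every line of $\PHG(n-1,R/N^{m-i})$ that is not contained in the missing part comes from $\mathcal{L}(S)$. The image of a line of $T$ is spanned by $(\bar u_1:\bar w_1)$ and $(\bar u_2:\bar w_2)$ with $\bar u_1,\bar u_2$ linearly independent modulo $N$. Hence every point on it has a unit among its first $s$ coordinates, i.e.\ the image line is disjoint from $[U]^{(1)}$. A line meeting $[U]^{(1)}$ without lying in it is never an image; for $m-i=1$, take a line of $\PG(n-1,q)$ through exactly one point of the deleted subspace.

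Your lifting argument works precisely for the lines disjoint from $[U]^{(1)}$; the ``unit minors'' condition is exactly that, and these are the lines of the embedded structure. In any case this surjectivity on lines goes beyond what the statement requires. The statement only needs an incidence-preserving injection whose point image is the complement of $[U]^{(1)}$.
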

  
\begin{proof}
Let $S$ be given by $x_{s+1}=x_{s+2}=\ldots=x_n=0$. In other words,
$S$ contains all points represented (in homogeneous coordinates)
as vectors orthogonal to the columns of the matrix
$\displaystyle \left(\begin{array}{c}
\vek{0}\\\hline \vek{I}\end{array}\right).$
where $\vek{0}$ is the $k\times(n-k)$ zero matrix, and 
$I$ -- the identity matrix of order $n-k$. Every space $T\in[S]^{(i)}$
can be represented by the space orthogonal to the columns of
\[\left(\begin{array}{c} \vek{A}\theta^i\\\hline \vek{I}
\end{array}\right),\]
where $A=(a_{ij})_{k\times(n-k)}$ and $a_{ij}\in R\setminus\rad^{m-i}R$. If
$\vek{a}_j\theta^i$ is the $j$-th column of $A$ then all the points
in $[S]^{(i)}$ can be reperesente as
\[(b_1,b_2,\ldots,b_k,-\vek{b}\vek{a}_1\theta^i,\ldots,-\vek{b}\vek{a}_{n-k}\theta^i),\]
where $\vek{b}=(b_1,\ldots,b_k)\in ((R\setminus\rad^{m-i}R)^k)^*$ ($R^K)^*$
is the set of all non-torsion vectors,
or all vectors in which at least one component is a unit).
\end{proof}  
  
The incidence structure $(\mathfrak{P},\mathfrak{L},\mathfrak{I})$ 
from Theorem~\ref{thm:factor_structure} will be denoted
by $\Sigma^{(i)}(S)$. The projective Hjelmslev geometry to which 
$\Sigma^{(i)}(S)$ is extended is denoted by
$\overline{\Sigma}^{(i)}(S)\cong\PHG(n-1,R/\rad^{m-i}R)$.
  
For a more in-depth introduction to projective Hjelmslev geometries
we refer to \cite{HL98,HL12}.

\section{Erd\H{o}s-Ko-Rado Type Theorems for systems of Hjelmslev Subspaces}

Let $\Sigma=\PHG(n-1,R)$.
A family $\mathcal{F}$ of subspaces of $\Sigma$ of a fixed shape
$\kappa$ is called $\tau$-intersecting if every two subspaces 
from $\mathcal{F}$ meet in a subspace of shape $\tau$. 

In this  section, we shall be focused on the following two questions:

\begin{enumerate}[(1)]
\item What is the maximal size of a $\tau$-intersecting family of subspaces
of shape $\kappa$ in $\PHG(n-1,R)$?	
\item What is the sttructure of a $\tau$-intersecting familiy in $\Sigma$
that is maximal cardinality?
\end{enumerate}

We are going to tackle the two classical questions for 
$\tau$-intersecting families:

\begin{enumerate}[(1)]
\item  What is the maximal size of a
an $\tau$-intersecting family of subspaces of the same shape 
$\kappa$ in $\PHG(n,R)$?

\item What is the structure of a $\tau$-intersecting family of maximal cardinality?
\end{enumerate}

Throghout this section $R$ will be a chain ring 
with $|R|=q^m$ and $R/\rad R\cong\mathbb{F}_q$. By $\Sigma$ we shall denote
the $(n-1)$-dimensional (left) projetcive Hjelmslev geometry
$\PHG(n-1,R)$, $n\ge3$.

We start with a straightforward lemma which follows from 
the nested structure of the projective Hjelmlsev spaces. It states
that the image under $\eta_i$ of a $\tau$-intersecting
family is a $\tau'$-intersecting family for a  suitable $\tau'$. 

\begin{lemma}
\label{lma:factor}
Let $\mathcal{F}$ be a 
$\tau$-intersecting familiy
of subspaces in $\Sigma$ of shape $\kappa$, where
\[\kappa=m^{k_m}(m-1)^{k_{m-1}}\ldots1^{k_1},\]
\[\tau=m^{t_m}(m-1)^{t_{m-1}}\ldots1^{t_1}.\]
Then for every $i\in\{1,\ldots,m-1\}$
$\eta_i(\mathcal{F})=\{\eta_i(F_1),\cdots,\eta_i(F_M)\}$ is a $\tau'$-intersecting family of
subspaces of shape $\kappa'$ in $\PHG(n-1,R/N^i)$, where
\[\kappa'=i^{k_m}(i-1)^{k_{m-1}}\ldots1^{k_{m-i+1}},\]
\[\tau'=i^{t_m}(i-1)^{t_{m-1}}\ldots1^{t_{m-i+1}}.\]
In particular, $\eta_1(\mathcal{F})$ is a $(t_m-1)$-intersecting family
of $(k_m-1)$-subspaces of $\PG(n-1,q)$.
\end{lemma}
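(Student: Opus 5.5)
The plan is to reduce everything to one statement about shapes: for any submodule $U\le {}_RR^n$ of shape $\lambda$, the image $\eta_i(U)\le (R/N^i)^n$ has shape $\lambda$ truncated at $i$. Concretely, every part $\lambda_j\ge m-i+1$ becomes $\lambda_j-(m-i)$, and parts $\le m-i$ disappear. Here $\eta_i$ is applied coordinatewise to $R^n$.

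To prove this, first put $U$ in normal form using Theorem~\ref{thm:modules}. Choose a basis $e_1,\dots,e_n$ of $R^n$ such that
\[U=\theta^{m-\lambda_1}Re_1\oplus\cdots\oplus\theta^{m-\lambda_k}Re_k.\]
This is the standard fact that a submodule of a free module over a chain ring has an adapted basis; it follows from Smith normal form over the principal ideal ring $R$. Since $\eta_i$ is induced by the surjection $R\to R/N^i$, the images $\bar e_j$ form a basis of $(R/N^i)^n$. Then
\[\eta_i(U)=\bigoplus_j \eta_i(\theta^{m-\lambda_j}R)\,\bar e_j .\]
The ideal $\theta^{m-\lambda_j}R$ maps to $\theta^{m-\lambda_j}(R/N^i)$. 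This image is zero if $m-\lambda_j\ge i$, and otherwise it is cyclic of length $i-(m-\lambda_j)$. This gives exactly the exponent shift in $\kappa'$: parts equal to $m$ become $i$, parts equal to $m-1$ become $i-1$, and so on down to $m-i+1\mapsto 1$. So $\eta_i(F)$ has shape $\kappa'$ for every $F\in\mathcal{F}$. (Points and Hjelmslev subspaces behave consistently, since free summands stay free.)

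For the intersection shape, the key step is to show that
\[\eta_i(F\cap G)=\eta_i(F)\cap\eta_i(G).\]
The inclusion $\subseteq$ is trivial. The reverse inclusion is the main obstacle. Indeed, $\eta_i(F)\cap\eta_i(G)=\eta_i\bigl((F+\ker)\cap(G+\ker)\bigr)$, where $\ker=N^iR^n=\theta^iR^n$, and in general this can be strictly larger than the image of $F\cap G$.

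I would handle this in one of two ways.

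The first option is to follow the paper's intended meaning, in which $\eta_i$ acts on subspaces via the nested structure of Theorem~\ref{thm:factor_structure}. Under that reading, the image of a subspace is its neighbour class in $\PHG(n-1,R/N^i)$. The intersection of two neighbour classes is then the image of the intersection, because both are computed from the filtration $U\cap\theta^{j}R^n$ layer by layer.

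The second option is a concrete shape comparison. Note that $(F+\theta^iR^n)\cap(G+\theta^iR^n)$ contains $F\cap G+\theta^iR^n$. Then compare the dual shapes $\lambda'_{m-i+1},\dots,\lambda'_m$ of both sides, using that these dual-shape entries count the ranks of $\theta^{j}U\cap\theta^{m-1}R^n$.

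The upshot in either case is that every pair $\eta_i(F),\eta_i(G)$ meets in a subspace of shape $\tau'$, which is the truncation of $\tau$. So $\eta_i(\mathcal{F})$ is $\tau'$-intersecting.

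For the final sentence, take $i=1$. Then $R/N\cong\mathbb{F}_q$, so $\kappa'=1^{k_m}$ is a $k_m$-dimensional vector subspace, that is, a projective $(k_m-1)$-subspace of $\PG(n-1,q)$. Likewise $\tau'=1^{t_m}$ gives intersections of projective dimension $t_m-1$.
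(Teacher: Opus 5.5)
Your shape computation is fine: with an adapted basis, $\eta_i(U)\cong U/(U\cap\theta^iR^n)$ has the truncated shape, so each $\eta_i(F)$ has shape $\kappa'$. The gap is in the intersection step. The identity $\eta_i(F\cap G)=\eta_i(F)\cap\eta_i(G)$, which you make the key step, is false, as you half-suspect. Neither of your remedies can therefore establish it.

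Take $m=2$, $n\ge3$, $F=Re_1\oplus Re_2$ and $G=Re_1\oplus R(e_2+\theta e_3)$. Then $F\cap G=Re_1\oplus R\theta e_2$ has shape $2^11^1$, and $\eta_1(F\cap G)=\langle\bar e_1\rangle$ is a point. But $\eta_1(F)=\eta_1(G)=\langle\bar e_1,\bar e_2\rangle$ is a line.

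This example refutes your remedies as follows:
\begin{enumerate}[(a)]
\item Your first option fails outright. Here $[F]^{(1)}=[G]^{(1)}$, so the intersection of the two neighbour classes is the whole class, not the image of $F\cap G$.
\item Your second option only ever gives the trivial inclusion $\eta_i(F\cap G)\subseteq\eta_i(F)\cap\eta_i(G)$. No comparison of dual shapes can upgrade it, because the two sides really have different shapes ($1^1$ versus $1^2$).
\item Your ``upshot'' is false for the same reason: $\tau=2^11^1$ gives $\tau'=1^1$, yet the images meet in shape $1^2$.
\end{enumerate}
The same defect occurs for non-neighbours, e.g.\ $F=Re_1\oplus Re_2\oplus Re_3$ and $G=Re_1\oplus R(e_2+\theta e_4)\oplus Re_5$ in $R^5$. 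So if ``$\tau$-intersecting'' meant ``meeting in shape exactly $\tau$'', the lemma itself would be false, and no argument could prove it.

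The missing observation is that the reverse inclusion is never needed. ``$\tau$-intersecting'' has to be read as ``the intersection contains a submodule of shape $\tau$'', i.e.\ has shape componentwise $\ge\tau$. This is how the paper uses it later (``meet in a subspace that contains a subspace of shape $\tau$''), and how $t$-intersecting is meant classically. The paper gives no proof beyond saying the lemma follows from the nested structure.

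With this reading the lemma follows immediately from your first step. If $T\le F\cap G$ has shape $\tau$, then $\eta_i(T)\le\eta_i(F\cap G)\le\eta_i(F)\cap\eta_i(G)$, and $\eta_i(T)$ has shape $\tau'$ by your truncation rule. So keep the shape computation and the trivial inclusion, and drop the attempted equality.
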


The next result can be viewed as an analogue of Tanaka's Theorem
for intersection families of Hjelmlslev subspaces (shape $\kappa=m^k$)
in the Hjelmslev geometry $\Sigma$.

\begin{theorem}
\label{thm:Tanaka}
Let $\Sigma=\PHG(n-1,R)$, where $|R|=q^m$, $R/N\cong\mathbb{F}_q$.
Let $t,k,n$ be integers with 
$1\le t<k\le n/2$, and let
$\tau=m^t$, $\kappa=m^k$. 
Let further $\mathcal{F}$ be a $\tau$-intersecting family
of subspaces of shape $\kappa$ in $\Sigma$ with the additional property
that the subspaces from $\mathcal{F}$ do have no common points with a neighbor class $[W]$, where $W$ is a Hjelmslev subspace with $\dim W=n-k-1$.
Then 
\begin{equation}
\label{eq:tanaka-phg}
|\mathcal{F}|\le q^{(k-t)(m(n-k-1)+1)}.
\end{equation}
In case of equality, $\mathcal{F}$ is one of the following:

\begin{enumerate}[(a)]
	\item the set of all subspaces through a fixed $(t-1)$-dimensional 
	(Hjlemslev) subspace $(U)$ with 
	$U\cap[W]=\varnothing$;
	
	\item in the case $k=n/2$, 
	$\mathcal{F}$ can also be the set of all
	$(k-1)$-dimensional subspaces on a fixed 
	$(2k-t-1)$-dimensional subspace $U$ with 
	$\dim U\cap W=k-t-1$.
\end{enumerate}
\end{theorem}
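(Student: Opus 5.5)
We will reduce the statement to Tanaka's Theorem~\ref{thm:tanaka} via the map $\eta_1$ of Lemma~\ref{lma:factor}, and then count the fibres of that map.

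\emph{Step 1 (the image family).} By Lemma~\ref{lma:factor}, $\overline{\mathcal{F}}=\eta_1(\mathcal{F})$ is a family of $(k-1)$-dimensional subspaces of $\PG(n-1,q)$ that pairwise meet in at least a $(t-1)$-dimensional subspace. The condition $F\cap[W]=\varnothing$ means that no point of $F$ is a neighbour of a point of $W$. This is equivalent to $\eta_1(F)\cap\eta_1(W)=\varnothing$ in $\PG(n-1,q)$, where $\dim\eta_1(W)=n-k-1$. So, with $d=k$ and $e=n-k\ge d$, the family $\overline{\mathcal{F}}$ lies in Tanaka's set $\mathcal{U}$. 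Theorem~\ref{thm:tanaka} then gives $|\overline{\mathcal{F}}|\le q^{(k-t)(n-k)}$.

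\emph{Step 2 (the fibres).} Fix an image $\overline{F}$ and count the members of $\mathcal{F}$ in $\eta_1^{-1}(\overline{F})$. We use the coordinates from the proof of Theorem~\ref{thm:factor_structure}. Put $\overline{W}=\eta_1(W)$ in the form $x_1=\dots=x_k=0$. Because $F\cap[W]=\varnothing$, every admissible $F$ is the row space of a matrix $(\vek{I}_k\mid \vek{X})$ with $\vek{X}\in R^{k\times(n-k)}$. The fibre over $\overline{F}$ then consists of the choices of $\vek{X}$ with $\eta_1(\vek{X})$ fixed, i.e.\ $\vek{X}=\vek{X}_0+\theta\vek{Y}$, which gives $q^{(m-1)k(n-k)}$ lifts.

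We must now bound how many of these lifts can lie in a $\tau$-intersecting family. Two lifts of the same $\overline{F}$, or of two images meeting in a $(t-1)$-space, must meet in a free rank-$t$ submodule. Through a common free $t$-subspace $U$, the lifts are parametrised by $(k-t)\times(n-k)$ blocks. So the fibre, together with its interaction with the rest of the family, is governed by the $(k-t)(n-k)$ "free" coordinates of $\vek{X}$ over $N/N^m$. This is where the extra factor $q^{(m-1)(k-t)(n-k)}$ should come from.

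The exponent in the statement, however, is $(k-t)(m(n-k-1)+1)$ rather than $m(k-t)(n-k)$. I therefore expect the count to be organised level by level. I would induct on $m$ using $\eta_{m-1}$ and the nested structure of Theorem~\ref{thm:factor_structure}: pass from $R/N^{i}$ to $R/N^{i+1}$, and at each level show that a fixed $\tau$-intersecting family has at most $q^{(k-t)(n-k-1)}$ admissible extensions per member. This is an affine Tanaka-type count inside $\Sigma^{(i)}(S)$, where the missing neighbour class plays the role of $W$. Multiplying the level-$1$ bound $q^{(k-t)(n-k)}$ by $m-1$ such factors gives
\[
q^{(k-t)(n-k)+(m-1)(k-t)(n-k-1)}=q^{(k-t)(m(n-k-1)+1)},
\]
as in \eqref{eq:tanaka-phg}. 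Reconciling the per-level factor $q^{(k-t)(n-k-1)}$ with the naive $q^{(k-t)(n-k)}$ from Step 2 is the main obstacle. My first attempt would be to exploit that the $t$-space $U$ must itself avoid $[W]$: this should cost one coordinate direction per level.

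\emph{Step 3 (equality).} Equality forces equality at level $1$. By Theorem~\ref{thm:tanaka}, $\overline{\mathcal{F}}$ is then either a star through a $(t-1)$-space disjoint from $\overline{W}$, or, when $k=n/2$, the family inside a $(2k-t-1)$-space. Equality at each higher level forces the lifted family to be of the same type. Lifting the common space, or the ambient space, level by level yields configuration (a) or (b). To finish, we check that (a) and (b) actually attain the bound by direct counting with Theorem~\ref{thm:counting}.
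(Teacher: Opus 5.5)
The gap is the one you flag yourself, and it cannot be closed. No per-level factor $q^{(k-t)(n-k-1)}$ exists, because the inequality \eqref{eq:tanaka-phg} is false for every $m\ge2$.

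In your coordinates take $W$ spanned by $e_{k+1},\dots,e_n$ and $U$ the row space of $(\vek{I}_t\;\vek{0}\mid\vek{0})$. The row space of $(\vek{I}_k\mid\vek{X})$ contains $U$ iff the first $t$ rows of $\vek{X}$ vanish. It automatically avoids $[W]$, and the other $(k-t)(n-k)$ entries of $\vek{X}$ are arbitrary in $R$. So family (a) has $q^{m(k-t)(n-k)}$ members, which exceeds the claimed bound by the factor $q^{(m-1)(k-t)}$. For example, take $R=\mathbb{Z}_4$, $n=4$, $k=2$, $t=1$. The $16$ Hjelmslev lines spanned by $(1,0,0,0)$ and $(0,1,c,d)$, $c,d\in\mathbb{Z}_4$, all pass through one point and miss $[W]$, while the bound is $8$.

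Your suggested mechanism gains nothing: $U\subseteq F$ and $F\cap[W]=\varnothing$ already force $U\cap[W]=\varnothing$, so no coordinate direction is lost. The final check in Step~3, that (a) attains the bound, would also fail.

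Your Step~2 count was the right one. The paper argues along exactly your route: induction on $m$ via $\eta_{m-1}$, the neighbour class $[F_u]^{(m-1)}$ viewed as $\PG(n-1,q)$ minus an $(n-k-1)$-space, and Tanaka in each fibre. But it takes the fibre bound to be $q^{(k-t)(n-k-1)}$. That is, it applies Theorem~\ref{thm:tanaka} with $e=n-k-1$, the projective rather than the vector dimension of the deleted space (compare the misprint $d+e=n-1$ in its statement of that theorem).

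Done correctly, two lifts $\vek{X}_0+\theta^{m-1}\vek{Y}$ and $\vek{X}_0+\theta^{m-1}\vek{Y}'$ of one image share a free rank-$t$ submodule iff $\mathrm{rk}\,\overline{(\vek{Y}-\vek{Y}')}\le k-t$. So each fibre is a $t$-intersecting family in Tanaka's $\mathcal{U}$ with $d=k$, $e=n-k$. It therefore has at most $q^{(k-t)(n-k)}$ members, and (a) attains this. Hence the method proves $|\mathcal{F}|\le q^{m(k-t)(n-k)}$, in line with the factor $q^{(m-1)(k-t)(n-k)}$ of Theorem~\ref{thm:A}. Family (a) and, for $n=2k$, family (b) attain it.

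Even for this corrected bound, Step~3 still needs an argument that the fibrewise extremal families fit together, i.e.\ the same $t$-space, or the same $(2k-t-1)$-space, across all fibres. Equality level by level does not give that on its own.
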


\begin{proof}
We shall prove the result by induction on the length $m$ of the chain ring $R$.
the case $m=1$ is just Tanaka's theorem.

Assume the result is proved for intersection sets of Hjelmslev subspaces
in projective Hjelmsev geometries over chain rings of length at most $m-1$.
Let $\mathcal{F}=\{F_i\}_{i=1}^M$ be a  $\tau$-intersection family ($\tau=m^k$)
of $\kappa$-subspaces ($\kappa=m^k$) in $\Sigma$ satisfying the conditions of the theorem.
By Lemma~\ref{lma:factor}, the family
\[\mathcal{F}'=\eta_{m-1}(\mathcal{F})=\{\eta_{m-1}(F_1),\cdots,\eta_{m-1}(F_M)\}\]
is a $\tau'$-intersection family of subspaces 
in $\PHG(n-1,(R/N^m-1))$ of shape $\kappa'$, where
$\tau'=(m-1)^t$, $\kappa'=(m-1)^k$. The images $F_i'=\eta_{m-1}(F_i)$ are $k$-dimensional
subspaces of $\Sigma'$, and, similarly $[F_i']=\eta_{m-1}([F_i])$. 
By the induction hypothesis,
\[|\mathcal{F}'|\le  q^{(n-k)(k-t)+(m-2)(k-t)(n-k-1)}.
\]
The elements of $\mathcal{F}$ the are mapped under
$\eta_{m-1}$ to the same $\kappa'$-subspace in $\Sigma'$ are contained in the same 
neighbor class $[F_u]^{(m-1)}$ of subspaces of shape $\tau'$.

By the structure theorem (Thm~\ref{thm:factor_structure}),
$[F_u]$ has the structure of $\PHG(n-1,R/N)$ minus a subspace of dimension $n-k-1$.
In addition, every two subspaces from $\mathcal{F}$ contained in $[F_u]$ meet
in a subspace that contains  a subspace of shape $\tau$. 
(Actually they meet in a subspace of shape at least $m^t(m-1)^{k-t}$).
The subspaces from $\mathcal{F}$ contained in $[F_u]$ can be viewed as
a set of subspaces in $\PG(n-1,q)$ avoiding a subspace of dimension $n-k-1$, and 
meeting pairwise in a subspace of dimension at least $t$.
Hence their number is at least $q^{(k-t)(n-k-1)}$. Putting this together, we get
\begin{eqnarray*}
|\mathcal{F}| &\le & q^{(k-t)(n-k)+(m-2)(k-t)(n-k-1)}\cdot q^{(k-t)(n-k-1)} \\
              &=& q^{(k-t)((n-k)+(m-1)(n-k-1))}.
\end{eqnarray*}  

Now assume $\mathcal{F}$ is a family of maximal cardinality, i.e. a family for which equality in (\ref{eq:tanaka-phg}) is achieved.

(1) Let $k<\frac{}{}$. We noted that $\eta_{m-1}(\mathcal{F})$ is a $\tau'$-intersection set of
$\kappa'$-spaces in $\PHG(n-1,R/N^{m-1})$. By the induction hypothesis, $\eta_{m-1}(\mathcal{F})$ consists of all $\kappa'$-subspaces through a fixed
$\tau'$-space. The preimage of this common $\tau'$-subspace is a neighbor
class $[T]^{(m-1)}$, i.e. the image of a subspace of shape $m^t(m-1)^{n-t}$.

Let $F_u, F_v\in\mathcal{F}$ with $\dim F_u\cap F_v$and $F_u\nei F_v$, i.e. $\eta_1(F_u)\ne\eta_1(F_v)$.
Clearly, $F_u\cap F_v$ is an Hjelmslev subspace.
The $\kappa$-subspaces of $\mathcal{F}$ contained in
$[F_u]$ are $(k-1)$ subspaces in a geoemtry isomorphic to $\PG(k-1,q)$ avoiding an
$(n-k-1)$-dimensional subspace. Hence they are all subspaces through a fixed
$(t-1)$-dimensional subspace. This $(t-1)$-subspace 
is the image of a space of shape $m^t(m-1)^{k-t}$, $T_u$ say. 
A similar observation can be made for the $\kappa$-subspaces of $\mathcal{F}$ in $[F_v]$
We denote the corresponding space of shape $m^t(m-1)^{k-t}$ by $T_v$.

The intersection $T_u\cap T_v$ should contain an Hjelmslev subspace of dimension $t-1$.
Otherwise it is easy to construct Hjelmslev subspaces of dimension $k$ injelmslev subspace, whence $T_u\cap T_v$  is a $(t-1)$-dimensional Hjelmslev subspace.

No the result is implied by the fact that the graph $G$ with vertices the
$(k-1)$-dimensional subspaces of $\Sigma$ and neighborhood given by $F_uF_v$
is an edge iff $F_u\cap F_v$ is an Hjelmslev subspace of dimension at least $t-1$, is connected.
\smallskip

(2) Now let $k=\frac{n}{2}$. Now $\eta_{m-1}(\mathcal{F})$ is the set of all 
$(k-1)$-dimensional suspaces in a $(2k-t-1)$-dimensional subspace of $\Sigma'$. 
The subspaces from $\mathcal{F}$ in the neighbor class $[F_u]^{(m-1)}$ are either all
canonically intersecting or all of the other type described in Tanaka's theorem.
In the first case, $\mathcal{F}$ is canonically intersecting, while in the second
it consists of all Hjelmslev subspaces of dimension $k-1$ contained in a Hjelmslev
subspace of dimension $2k-t-1$ meeting $W$ in a subspace of dimension $k-t-1$.
\end{proof}

\begin{theorem}
\label{thm:A}
Let $t,k,n$ be integers with $1\le t<k\le n/2$, 
and let $\tau=m^t$, $\kappa=m^k$.
Let $\mathcal{F}$ be a $\tau$-intersecting family of 
$\kappa$-subspaces in $\Sigma=\PHG(n-1,R)$. Then 
	\[|\mathcal{F}|\le\gauss{m^{n-t}}{m^{k-t}}{q^m}=q^{(m-1)(k-t)(n-k)}\gauss{n-t}{k-t}{q}.
	\]
In case of equality, $\mathcal{F}$ is one of the following:

In case of equality $\mathcal{F}$ is one of the following:
\begin{enumerate}[$(a)$]
\item all Hjelmslev subspaces of dimension $k-1$ through a fixed Hjelmslev subspace of dimension $t-1$; 
\item if $2k=n$, all Hjelmslev subspaces of dimension $k-1$ contained in a 
fixed subspace of $\Sigma$ of dimension $2k-t-1=n-t-1$.
\end{enumerate}
\end{theorem}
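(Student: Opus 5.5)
Our plan is to argue by induction on the length $m$, as in the proof of Theorem~\ref{thm:Tanaka}. For $m=1$ the statement is exactly Theorem~\ref{thm:hsieh} with $n\ge 2k$. So assume $m\ge2$ and write $\Sigma'=\PHG(n-1,R/N^{m-1})$. By Lemma~\ref{lma:factor}, $\mathcal{F}'=\eta_{m-1}(\mathcal{F})$ is a $(m-1)^t$-intersecting family of $(m-1)^k$-subspaces of $\Sigma'$. By the induction hypothesis,
\[|\mathcal{F}'|\le q^{(m-2)(k-t)(n-k)}\gauss{n-t}{k-t}{q}.\]
We then partition $\mathcal{F}$ according to the image under $\eta_{m-1}$. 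All members of $\mathcal{F}$ with a fixed image $F'$ lie in the neighbour class $[F_u]^{(m-1)}$. By Theorem~\ref{thm:factor_structure}, this class carries the structure $\Sigma^{(m-1)}(F_u)$, namely $\PG(n-1,q)$ with a subspace of dimension $n-k-1$ removed. The members of $\mathcal{F}$ in this class become $(k-1)$-subspaces of $\PG(n-1,q)$ that avoid the removed $(n-k-1)$-space $W$.

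Two such members $F_u,F_v$ meet in a subspace containing a free part of rank $t$, and they are $(m-1)$-neighbours. Hence the corresponding $(k-1)$-spaces in $\PG(n-1,q)$ meet in dimension at least $t-1$. Theorem~\ref{thm:tanaka}, applied with $d=k$ and $e=n-k\ge d$, bounds the size of each fibre by $q^{(k-t)(n-k)}$. Multiplying the two estimates gives
\[|\mathcal{F}|\le q^{(m-1)(k-t)(n-k)}\gauss{n-t}{k-t}{q}.\]
This coincides with the Gaussian-type count of Theorem~\ref{thm:counting} for the number of submodules of shape $m^{k-t}$ in $R^{n-t}$, which is the left-hand expression in the statement. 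We should check that identity directly from the product formula of Theorem~\ref{thm:counting} with all $\lambda_i'=n-t$ and $\mu_i'=k-t$. Only the $i=m$ factor contributes a Gaussian coefficient, and the others contribute powers of $q^{(k-t)(n-k)}$.

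For the equality case, both inequalities must be tight. First, $\mathcal{F}'$ is extremal in $\Sigma'$, so by induction it is of type (a) or (b) there. Second, every nonempty fibre is extremal in the sense of Theorem~\ref{thm:tanaka}, so it consists of all spaces through a fixed $(t-1)$-space avoiding $W$, or, when $n=2k$, of all spaces in a fixed $(2k-t-1)$-space meeting $W$ in dimension $k-t-1$.

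The main obstacle is to glue these local descriptions into a global one. In case (a) for $\mathcal{F}'$, every fibre lies over spaces through a fixed $(m-1)^t$-space $T'$. The fibre over $F'$ then determines a lift $T_{F'}$ of $T'$, a Hjelmslev $(t-1)$-space with $\eta_{m-1}(T_{F'})=T'$. We must show that these lifts all coincide. The plan is to pick $F_u,F_v\in\mathcal{F}$ in different fibres with $\eta_1(F_u)\ne\eta_1(F_v)$, chosen so that $F_u\cap F_v$ is exactly a Hjelmslev $(t-1)$-space. The $\tau$-intersection property applied to all members of both fibres then forces $T_{F_u}=T_{F_v}$: if the lifts differed, one could choose members through $T_{F_u}$ and $T_{F_v}$ whose intersection contains no free rank-$t$ part, which is a contradiction. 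We would then conclude using connectivity of the graph on the fibres whose edges are such pairs, exactly as sketched in part (1) of the proof of Theorem~\ref{thm:Tanaka}.

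When $n=2k$, we must additionally rule out mixing the two types. Here the same pair-comparison argument shows that if one fibre is of Tanaka type (b), then adjacent fibres are too, with compatible $(2k-t-1)$-spaces. This yields case (b): all Hjelmslev $(k-1)$-spaces inside one fixed $(n-t-1)$-dimensional Hjelmslev subspace. Finally, both families in (a) and (b) are $\tau$-intersecting of the right size by Theorem~\ref{thm:counting}, so the bound is attained.
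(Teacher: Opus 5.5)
Your proof of the bound is correct and is the paper's argument. You project with $\eta_{m-1}$, bound the image by induction (Theorem~\ref{thm:hsieh} for $m=1$), and view each fibre as a family of $(k-1)$-spaces of $\PG(n-1,q)$ avoiding an $(n-k-1)$-space and pairwise meeting in dimension $\ge t-1$. Theorem~\ref{thm:tanaka} then bounds each fibre. For $m=2$ this is literally the paper's sketch.

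The equality part has a genuine gap. Your claim that a fibre ``determines a lift $T_{F'}$ of $T'$'' is false. Suppose the fibre over $F'$ is of Tanaka type (a). Then all its members contain one subspace $M_{F'}=L+NF$ of shape $m^t(m-1)^{k-t}$, where $L$ is free of rank $t$ and $NF$ is the same for every $F$ in the fibre. So they contain all $q^{(m-1)t(k-t)}>1$ Hjelmslev $(t-1)$-spaces inside it, and nothing singles out one of them. Even in the extremal family (a) with centre $T$, every member of the fibre over $F'$ contains all $q^{t(k-t)}$ lifts of $T'$ lying in $T+N^{m-1}F$. Hence your step ``if the lifts differed, one could choose members whose intersection contains no free rank-$t$ part'' is wrong: in that family, take $T_{F_u}\neq T$ in $T+N^{m-1}F_u$ and $T_{F_v}=T$, and no contradiction arises. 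The correct local object is $M_{F'}$. The centre can only be read off from pairs, as $M_{F'_u}\cap M_{F'_v}$ for fibres with $F'_u\cap F'_v=T'$. This is what the paper indicates with $T_u\cap T_v$ in the proof of Theorem~\ref{thm:Tanaka}.

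Once the centre is defined only for pairs, connectivity of a graph on fibres does not propagate it; you need an argument using many fibres at once. Take $m=2$, $n=4$, $k=2$, $t=1$. A type-(a) fibre over a line $\ell\ni T'$ of $\PG(3,q)$ consists of the lines with reduction $\ell$ through an affine line $\lambda_\ell$. Compatibility with other fibres forces $\lambda_\ell\subseteq[T']\cong\mathrm{AG}(3,q)$, with direction $\ell$, and two such fibres are compatible iff $\lambda_\ell\cap\lambda_{\ell'}\neq\varnothing$. Three fibres whose lines $\ell$ are coplanar may have $\lambda$'s forming a triangle, so the common point changes along a path. Concurrency follows only because the directions of all lines through $T'$ span the whole space.

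The same example refutes your claim, for $n=2k$, that a type-(b) fibre forces its neighbours to be of type (b). The type-(a) fibre over $\ell_1$ is compatible with the type-(b) fibre consisting of all lines with reduction $\ell_3$ inside a Hjelmslev plane $H$ with $\lambda_1\subseteq H\cap[T']$. Excluding such mixtures again requires a global argument. In particular you must exclude type-(b) fibres over an image of type (a), where your sketch could not produce case (b) anyway.
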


\begin{proof}
We sketch a proof for rings $R$ with $|R|=q^2$, $R/\rad R\cong\mathbb{F}_q$.
For rings of larger length we use induction on $m$.

Consider an Erd\H{o}s-Ko-Rado set (EKR-set) in $\PHG(n,R)$.
Denote by $\eta$ the canonical homomorphism
\[\eta\colon\left\{\begin{array}{ccc}
R & \to & \mathbb{F}_q \\
a+b\theta & \to & a,
\end{array}\right..\]
where $a,b\in \Gamma$, and $\Gamma$ is a set $q$ elements from $R$ no two of 
which are congruent modulo $\rad R$.
If $U$ and $V$ are Hjlemslev subspaces of dimension $k$ having a common point
then $\eta(U)$ and $\eta(V)$ are subspaces in $\PG(n,q)$ meeting non-trivially.
Hence, $\eta(\mathcal{F})$ is an EKR-set in $\PG(n,q)$ (taking just one copy of
the same subspace).  

Now consider a neighbour class of Hjelmslev $k$-subspaces $[T]$ containing at least one subsace from 
$\mathcal{F}$. It is known that the geometry having as points the subspaces
$U\cap[x]$, where $U\in[T]$ and $x$ is a point that has a neighbor on $T$ is isomorphic
to $\PG(n,q)$ from which a $(n-1-k)$-dimensional subspace is deleted.
We denote this structure by $\Delta$. Now the largest set of $k$-dimensional subspaces in 
$\Delta$ every two of which meet in a point  consists of all $k$-subspaces
in $\Delta$ through a fixed point,or, in other words the maximal EKR-system
in $\PG(n,q)$ consisting of $k$-subspaces not meeting a fixed $(n-k-1)$-space $V$
consists of all subspaces not meeting $V$ containig a fixed point (Theorem~\ref{thm:tanaka}).

Now consider two neighbor classes $[S]$ and $[T]$ where $S$ and $T$ are Hjelmslev $k$-subspaces.
If the subspaces from $\mathcal{F}\cap[S]$ have the common point $x$ and all subspaces
from $\mathcal{F}\cap[T]$ have the common point $y$ then one can easily
construct two subspaces, from $\mathcal{F}\cap[S]$ and $\mathcal{F}\cap[T]$ respectively,
that do not have a common point. Hence $x=y$ which proves the result.
\end{proof}


\section{Erd\H{o}s-Ko-Rado Families of Non-free Subspaces}

The following example gives an intersecting family of non-free
subspaces which is not canonically intersecting.
Consider a chain ring $R$ with $|R|=q^2$, and with $R/\rad R\cong\mathbb{F}_q$. Let
$\Sigma=\PHG(3,R)$ be the threedimensional left projective Hjelmslev geometry
over $R$.
Set $\lambda=(2,2,1,0)$, i.e. the subspaces of shape $\lambda$ are the line 
stripes consisting of $q^2(q+1)$ points each. 
We are looking for an EKR-family of 
subspaces of shape $\lambda$ of the largest possibl size.
 
First consider the family $\mathcal{F}$ of
all $\lambda$-subspaces through a fixed point in $\Sigma$.
It is easily checked that
\[|\mathcal{F}|=q(q+1)(q^2+q+1).\]
Interestingly, there exist larger EKR-families.

First we are going to establish an upper bound on the size of an 
intersecting family of $\lambda$-subspaces. Assume $\mathcal{F}$
is such a family. Obviously, the set of all neighbour classes of lines containing
subspaces from $\mathcal{F}$ is an intersection family of lines in the
factor geometry (isomorphic to $\PG(3,q)$) and hence has size
$q^2+q+1$. In each neighbour class of lines
there exist $q^2(q+1)$ $\lambda$-subspaces, 
and every two such subspaces meet. Hence we get
\[|\mathcal{F}|\le q^2(q+1)(q^2+q+1).\]
However this bound can be improved.

Consider two $\lambda$-subspaces from $\mathcal{F}$, $F_1$ and $F_2$ say, that belong
to different neighbour classes of lines, but lie in planes that are neighbours.
Denote the latter by $H_1$ and $H_2$; obviously $[H_1]=[H_2]=:[H]$.
Then $[H]$ has the structure of $\PG(3,q)$ minus a point 
(dual affine space) and $F_1$, $F_2$ are lines in it. This implies that $[H]$ contains at most
$q(q+1)$ stripes from $\mathcal{F}$ and each of $[F_1]$ and $[F_2]$ contains at most 
$q$ stripes from $\mathcal{F}$. 

Let $F\in\mathcal{F}$ and denote by $\nu_{[F]}$ 
the number of planes in the factor geometry determined by $[F]$ and another
class of lines containig subspaces from $\mathcal{F}$. Now the number of $\lambda$-subspaces 
contained in $[F]$  is at most
\begin{equation}
\label{eq:stripes}
q^2(q+1-\nu_{[F]})+q\nu_{[F]}\le q(q^2+1).
\end{equation}
This gives the slightly better estimate on the number of $\lambda$-subspaces
in an intersection family $\mathcal{F}$:
\begin{equation}
\label{eq:Fbound}
|\mathcal{F}|\le q(q^2+1)(q^2+q+1).
\end{equation}  
 
Now we shall demonstrate that the bound (\ref{eq:Fbound}) is sharp.
In order to construct an intersection family of this size
we should start with a maximal intersection family in the factor geometry $\PG(3,q)$.
It is one of the following

- a pencil of lines,

- all lines in a plane.

In the first case we have $\nu_{[F]}=q+1$ for every $F\in\mathcal{F}$
and the bound cannot be reached since the inequality in (\ref{eq:stripes}) is sharp.
In second case, we have $\nu_{[F]}=1$ for all $F\in\mathcal{F}$ and an intersection family of the size
given by (\ref{eq:Fbound}) can be constructed.
Such a family consists of all $\lambda$ subspaces
contained ina fixed neighbour class of planes $[H]$ except for those that have the direction 
of $[H]$. Among the latter we select only $q(q+1)$  that form an
intersection family in the factor geometry defined on $[H]$, isomorphic to
$\PG(3,q)$ minus a point.  

\begin{theorem}
	Let $R$ be finite chain ring with $|R|=q^2$, $R/N\cong\mathbb{F}_q$.
Let $k\ge1$ be  an integer and let
$\tau=2^1$, $\kappa=2^k1^{k-1}$, and
$n=2k$. Let $\mathcal{F}$ be a $\tau$-intersecting family of 
$\kappa$-subspaces in $\Sigma=\PHG(2k-1,R)$. Then
	\[|\mathcal{F}|\le \left(q^{k+1}\gauss{k-1}{1}{q}+1\right)\gauss{2k-1}{k-1}{q}.
	\]
In case of equality, $\mathcal{F}$ is the following:

\begin{enumerate}[]
	\item all subspaces of shape $\kappa$ contained in $[F]$,
	where $F$ is a hyperplane in $\Sigma$, 
	apart from those that have the ``direction'' 
	of $F$, plus all $\kappa$-subspaces contained in 
	$F$.	
\end{enumerate}

\end{theorem}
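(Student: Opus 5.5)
We generalize the argument given above for $\PHG(3,R)$ with $\lambda=(2,2,1,0)$, which is the case $k=2$. The plan has three steps: project the family to the factor geometry, bound the number of members in each neighbour class, and then optimize over the possible projected families.

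\emph{Step 1: projection.} A $\kappa$-subspace $F$ with $\kappa=2^k1^{k-1}$ has $\eta_1(F)$ a $(k-1)$-dimensional subspace of $\PG(2k-1,q)$. Two members of $\mathcal{F}$ meet in a subspace containing a free point, so by Lemma~\ref{lma:factor} the images $\eta_1(\mathcal{F})$ form a pairwise intersecting family of $(k-1)$-spaces in $\PG(2k-1,q)$. Since $n=2k$, Theorem~\ref{thm:hsieh} gives $|\eta_1(\mathcal{F})|\le\gauss{2k-1}{k-1}{q}$. Equality holds only for a pencil through a point or for all $(k-1)$-spaces in a hyperplane $\eta_1(H)$; note that $\gauss{2k-1}{k}{q}=\gauss{2k-1}{k-1}{q}$.

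\emph{Step 2: counting inside one class.} Fix a neighbour class $[F]$ of free $k$-spaces. Using Theorem~\ref{thm:counting}, count the $\kappa$-subspaces whose image is a fixed $(k-1)$-space; any two of them meet, exactly as the stripes did for $k=2$. Next, decompose by direction. The $\kappa$-subspaces in $[F]$ are indexed by the choice of $(k-1)$-dimensional "direction" data, which correspond to hyperplane classes $[H]\supseteq[F]$, i.e. hyperplanes of $\PG(2k-1,q)$ through $\eta_1(F)$. Suppose a second class $[F']$ of $\mathcal{F}$ spans, together with $[F]$, a hyperplane class $[H]$. Then inside $[H]$ we apply Theorem~\ref{thm:factor_structure}: $[H]$ looks like $\PG(2k-1,q)$ minus a point. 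The members lying in $[H]$ with direction $H$ must form an intersecting family of $(k-1)$-spaces avoiding that point, so Theorem~\ref{thm:tanaka} (with $d=k$, $e=k-1$ in the dual picture) caps them. Summing over directions yields an inequality analogous to~(\ref{eq:stripes}):
\[
|\mathcal{F}\cap[F]|\le A\,(\text{number of free directions})+B\,\nu_{[F]},
\]
where $\nu_{[F]}$ is the number of hyperplanes through $\eta_1(F)$ spanned with other classes of $\eta_1(\mathcal{F})$, $A$ is the unrestricted count per direction, and $B<A$ is the Tanaka-type bound. The target is that this sum equals $q^{k+1}\gauss{k-1}{1}{q}+1$ when $\nu_{[F]}=1$, and is smaller otherwise.

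\emph{Step 3: optimization and equality.} For a pencil, or for any family that is not contained in a single hyperplane, $\nu_{[F]}$ is large. For the hyperplane family, $\nu_{[F]}=1$. I expect the main obstacle to be making this trade-off rigorous for non-maximal projected families. One must show that $|\mathcal{F}|\le\sum_{[F]}(\text{per-class bound})$ never beats the hyperplane configuration. I would try first to prove that $\nu_{[F]}\ge1$ whenever $|\eta_1(\mathcal{F})|\ge2$, so that the per-class bound with $\nu=1$ applies uniformly. Then multiply by the Hsieh bound from Step 1, and handle the degenerate case $|\eta_1(\mathcal{F})|=1$ separately, where it is trivially small. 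For equality, both factors must be tight. Tightness of the Hsieh bound with $\nu=1$ forces $\eta_1(\mathcal{F})$ to be all $(k-1)$-spaces of a hyperplane $\eta_1(H)$. The Tanaka equality case then forces the members with direction $H$ to be those through a common structure, which means those contained in a fixed hyperplane $F\in[H]$. Finally, a connectivity argument like the one in Theorem~\ref{thm:Tanaka} makes the choice of $F$ consistent across classes, yielding the stated extremal family.
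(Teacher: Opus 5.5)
\textbf{There is a genuine gap in Step 2.} No per-class inequality can give the stated bound. To reach $q^{k+1}\gauss{k-1}{1}{q}+1$ at $\nu_{[F]}=1$ you need $A=q^k$ and $B=1$, but the true local constant is $q^{k-1}$.

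Model a hyperplane class $[H]$ as $\PG(2k-1,q)$ minus a point $P_0$, and let $E=\eta_1(F)$. The members of $[F]$ with direction $H$ are the $(k-1)$-spaces not through $P_0$ inside the $k$-space $\Pi_E$ through $P_0$ lying over $E$. Take a direction-$H$ member $S'$ of another class $[E']$ with $E\cap E'$ a point. Then $S'$ meets $\Pi_E$ in a single point $X\neq P_0$, and exactly $q^{k-1}$ members of $[F]$ with direction $H$ pass through $X$.

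This value is attained by an intersecting family:
\begin{itemize}
\item let $\eta_1(\mathcal F)$ be all $(k-1)$-spaces of a hyperplane $\Pi$;
\item take all $\kappa$-subspaces over them whose direction is not $\Pi$;
\item as direction-$\Pi$ members, take the $(k-1)$-spaces through a fixed point $X\neq P_0$ of the model.
\end{itemize}
Here every $\nu_{[F]}=1$, yet the classes over spaces through the projection of $X$ contain $q^{k}(\gauss{k}{1}{q}-1)+q^{k-1}$ members. So ``per-class bound times Hsieh'' yields at best $\gauss{2k-1}{k-1}{q}\bigl(q^{k+1}\gauss{k-1}{1}{q}+q^{k-1}\bigr)$.

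For $k=2$ this is exactly the $q(q^2+1)(q^2+q+1)$ of the paper's $\PHG(3,R)$ discussion, which exceeds the theorem's $(q^3+1)(q^2+q+1)$. That discussion is the only argument the paper offers; it contains no proof of the theorem itself, so it is not a template that reaches the stated bound.

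As you formulate it, the inequality is even false. A class is restricted in direction $H$ only if some other class actually contains members of direction $H$. If you put all $q^k$ direction-$\Pi$ members of one class into $\mathcal F$, and none elsewhere, that class has $q^k\gauss{k}{1}{q}$ members with $\nu=1$. Also, Theorem~\ref{thm:tanaka} does not apply here: the deleted set is a point, and $(d,e)=(k,k-1)$ violates $e\ge d$.

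\textbf{The missing idea is to count by direction, not by class.}
\begin{itemize}
\item Two members with meeting images $E_1\neq E_2$ can be disjoint only if they have the same direction $D$ and $D=E_1+E_2$, i.e.\ $E_1\cap E_2$ is a single point. If the directions differ, their fibres over a common point are non-parallel affine hyperplanes of $\theta R^{2k}\cong\mathbb{F}_q^{2k}$, so they meet. If the direction is the same and $E_1\cap E_2$ contains a line, the difference of the fibre maps is an additive map from at least $q^2$ elements into a group of order $q$, so it has a nonzero kernel.
\item All members of direction $D$, from all classes together, form one intersecting family of $(k-1)$-spaces of $\PG(2k-1,q)$ avoiding $P_0$.
\item Hsieh's theorem therefore gives $f_D\le\gauss{2k-1}{k-1}{q}$. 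Equality holds only for the $(k-1)$-spaces of a hyperplane not through $P_0$, i.e.\ the $\kappa$-subspaces of a single Hjelmslev hyperplane $F\in[D]$. This is where the ``$+1$ per class'' comes from, and it also gives the equality structure without any connectivity argument.
\end{itemize}

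This settles the two extremal shapes directly:
\begin{itemize}
\item if $\eta_1(\mathcal F)$ lies in a hyperplane $\Pi$, then $|\mathcal F|\le q^k(\gauss{k}{1}{q}-1)\gauss{2k-1}{k-1}{q}+\gauss{2k-1}{k-1}{q}$;
\item if $\eta_1(\mathcal F)$ lies in a pencil through $p$, then $|\mathcal F|\le\gauss{2k-1}{1}{q}\gauss{2k-1}{k-1}{q}$, which is smaller.
\end{itemize}

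What remains open in your plan is the case where $\eta_1(\mathcal F)$ is neither. There you need a genuinely global estimate, for instance a Hilton--Milner-type bound on $|\eta_1(\mathcal F)|$ combined with the trivial count $q^k\gauss{k}{1}{q}$ per class. Your ``$\nu\ge1$ uniformly'' step does not supply this.
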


\section*{Acknowledgements}
This research of the first author was supported 
by the Bulgarian  National  Science  Research  Fund
under  Grant  KP-06-N72/6-2023. The research of the third author was supported by 
the
Research Fund of Sofia University under Contract 80-10-14/21.05.2025.

\end{document}